\chardef\csname pre amssym.def at\endcsname=\the\catcode`\@
\def\undefine#1{\let#1\undefined}
\def\newsymbol#1#2#3#4#5{\let\next@\relax
 \ifnum#2=\@ne\let\next@\msafam@\else
 \ifnum#2=\tw@\let\next@\msbfam@\fi\fi
 \mathchardef#1="#3\next@#4#5}
\def\mathhexbox@#1#2#3{\relax
 \ifmmode\mathpalette{}{\m@th\mathchar"#1#2#3}%
 \else\leavevmode\hbox{$\m@th\mathchar"#1#2#3$}\fi}
\def\hexnumber@#1{\ifcase#1 0\or 1\or 2\or 3\or 4\or 5\or 6\or 7\or 8\or
 9\or A\or B\or C\or D\or E\or F\fi}
\font\tenmsa=msam10
\font\sevenmsa=msam7
\font\fivemsa=msam5
\edef\msafam@{\hexnumber@\msafam}
\mathchardef\dabar@"0\msafam@39
\def\dashrightarrow{\mathrel{\dabar@\dabar@\mathchar"0\msafam@4B}}
\def\dashleftarrow{\mathrel{\mathchar"0\msafam@4C\dabar@\dabar@}}
\font\tenmsb=msbm10
\font\sevenmsb=msbm7
\font\fivemsb=msbm5
\edef\msbfam@{\hexnumber@\msbfam}
\theoremstyle{plain}
\newtheorem{theorem}{Theorem}[section]
\newtheorem{lemma}[theorem]{Lemma}
\newtheorem{corollary}[theorem]{Corollary}
\theoremstyle{definition}
\newtheorem{definition}[theorem]{Definition}
\newtheorem{example}[theorem]{Example}
\newtheorem{remark}[theorem]{Remark}
\def\Resultant{{\rm Res}}
\def\res{{\rm Res}}
\def\deg{{\rm deg}}
\def\degree{{\rm deg}}
\def\cP{{\mathcal P}}
\def\cPx{{\mathcal P}_{x_1}}
\def\cPy{{\mathcal P}_{x_2}}
\def\cPz{{\mathcal P}_{x_3}}
\def\cPw{{\mathcal P}_{x_4}}
\def\cpp{{\mathfrak p}}
\def\cQ{{\mathcal Q}}
\def\cH{{\mathcal H}}
\def\cc{{\mathcal C}}
\def\cS{{\mathcal S}}
\def\cZ{{\mathcal Z}}
\def\cA{{\mathcal A}^\star}
\def\cX{{\rm X}}
\def\cY{{\rm Y}}
\def\cSx{{\mathcal S}_{x_1}}
\def\cSy{{\mathcal S}_{x_2}}
\def\cSz{{\mathcal S}_{x_3}}
\def\cSw{{\mathcal S}_{x_4}}
\def\cSm{{\mathfrak S}}
\def\cF{{\mathfrak F}}
\def\cB{{\mathfrak B}}
\def\qed{\hfill  \framebox(5,5){}}
\def\para{\vspace{2 mm}}
\def\card{{\rm Card}}
\def\mult{{\rm mult}}
\def\lcm{{\rm lcm}}
\def\K{{\mathbb K}}
\def\projdos{{\mathbb P}^2(\K)}
\def\projtres{{\mathbb P}^3(\K)}
\def\tt{{t^*}}
\def\ss{{s^*}}
\def\ttt{{\mathfrak t}}
\def\sss{{\mathfrak s}}
\def\vvv{{\mathfrak v}}
\def\ggg{{\mathfrak g}}
\def\tv{{\mathbf t}}
\def\sv{{\mathbf s}}
\def\numer{{\rm Numer}}
\def\pp{{\rm PrimPart}}
\def\content{{\rm Content}}
\def\ii{{\,\imath}\,}
\begin{document}

\title{Computing the Singularities of Rational Surfaces\thanks{This work was developed, and partially supported, under the research project MTM2008-04699-C03-01 "Variedades paramétricas: algoritmos y aplicaciones", Ministerio de Ciencia e Innovación, Spain
and by " Fondos Europeos de Desarrollo Regional" of the European Union.}}

\author{
S. P\'erez--D\'{\i}az, J.R. Sendra, C. Villarino  \\
Dpto. de Matem\'aticas \\
        Universidad de Alcal\'a \\
      E-28871 Madrid, Spain  \\
$\{$sonia.perez,rafael.sendra,carlos.villarino$\}$@uah.es
}
\date{}          
\maketitle

\begin{abstract}
 Given a rational projective parametrization $\cP(\ttt,\sss,\vvv)$ of a rational projective surface $\cS$ we present an algorithm such that, with the exception of a finite set (maybe empty) $\cB$ of projective base points of $\cP$, decomposes the projective parameter plane as $\projdos\setminus \cB=\cup_{k=1}^{\ell} \cSm_k$ such that if $(\ttt_0:\sss_0:\vvv_0)\in \cSm_k$ then $\cP(\ttt_0,\sss_0,\vvv_0)$ is a point of $\cS$ of multiplicity $k$.
\end{abstract}

\section{Introduction}\label{sec-intro}

The study, analysis and computation of the singular locus of algebraic varieties is an old but still very active research topic.   The interest on the study of  singularities is motivated by multiple reasons, being one of them their applicability; for instance, in geometric modeling, when determining
the shape and the topology of curves (either planar or spatial) and of surfaces, etc.  In this paper, we focus on the problem of computing the singularities, as well as their multiplicities, of rational surfaces given parametricaly.

\para

When the algebraic variety is given as a zero set of finitely many polynomials, the singularities  and their  multiplicities can be directly computed by applying elimination theory techniques as Gr\"obner bases, characteristic sets, etc. However when the algebraic variety is unirational and it is given by means of a rational parametrization, besides the question of computing the singular locus and its multiplicity structure, one has the additional problem of determining the parameter values that generate the singular points with their corresponding multiplicities. This, for instance, can be useful when using a parametrization for plotting a curve or a surface or when utilizing a parametrization for analyzing the intersection variety of two varieties being one of them  parametrically given.
 Of course, one can always apply elimination techniques to first provide the defining implicit polynomials of the variety, second to  determine the singularities from these polynomials, third to decompose the singular locus w.r.t. the multiplicities,  and finally   to compute the fibre (w.r.t. the parametrization) of the elements in the singular locus. Nevertheless, this can be  inefficient because of the computational complexity.

 \para

 So the challenge, in the unirational case, is to derive the singularities  and their multiplicity directly from a parametric representation avoiding the computation of the  ideal of the variety. The case of rational curves (both planar and spatial) has  been addressed by several authors (see \cite{chen}, \cite{Park}, \cite{Sonia-sing}, \cite{Rubio}). However, the case of rational surfaces
has not been so extensively studied. We refer  the reader to \cite{chena-regladas} where the case of rational ruled surfaces is analyzed.

\para

In this paper, we present an algorithm for computing the singularities of a rational projective surface from an input rational projective  parametrization not necessarily proper (i.e., birational). More precisely, the problem we  deal with is stated as:

\para

\noindent \underline{\sf Problem statement}
\begin{itemize}
\item Given a rational projective parametrization $$\cP(\ttt,\sss,\vvv)=(\cpp_1(\ttt,\sss,\vvv):\cdots:\cpp_4(\ttt,\sss,\vvv)),$$  of a rational projective surface $\cS\subset \projtres$, where $\K$ is an algebraically closed field of characteristic zero.
\item Decompose $\projdos$ as $\projdos=\cup_{k=1}^{\ell} \cSm_k$ such that if $(\ttt_0:\sss_0:\vvv_0)\in \cSm_k$ then $\cP(\ttt_0,\sss_0,\vvv_0)$ is a point of $\cS$ of multiplicity $k$.
\end{itemize}
Although abusing the terminology, we will use the following definition

\para

\begin{definition}
The elements in $\cSm_1$ are called {\sf $\cP$-simple points of $\cS$}, and the elements in $\cSm_k$, with $k>1$, {\sf $\cP$-singularities of $\cS$ of multiplicity $k$}. We refer to these points as {\sf affine (either $\cP$-simple or $\cP$-singular) points} if  $\vvv_0\neq 0$ and {\sf points (either $\cP$-simple or $\cP$-singular) at infinity} if $\vvv_0=0$. Moreover, we represent the multiplicity of $(\ttt_0:\sss_0:\vvv_0)$ as $\mult((\ttt_0:\sss_0:\vvv_0))$ meaning $$\mult((\ttt_0:\sss_0:\vvv_0))=\mult(\cP(\ttt_0,\sss_0,\vvv_0),\cS))$$ where $\mult(A,\cS)$ denotes the multiplicity of $A\in \projtres$ w.r.t. $\cS$.
\end{definition}

\para

The  polynomials $\cpp_1,\ldots,\cpp_4$ are assumed to be homogeneous of the same degree and coprime. Therefore the parametrization $\cP(\ttt,\sss,\vvv)$ induces the regular map
\[ \begin{array}{rccc} \cP:&\projdos\setminus \cB &\rightarrow &
\cS \\ & \overline{\alpha} & \mapsto & \cP(\overline{\alpha}) \end{array} \]
where $\cB=\{ \overline{\alpha}\in \projdos \,|\, \cpp_1(\overline{\alpha})=\cdots=\cpp_4(\overline{\alpha})=0\}$;  we call the elements in $\cB$ the base points of the parametrization (see Section \ref{sec-notation}). We will be able to decompose, as above,  $\projdos\setminus \cB$.

\para

$\cB$ is either zero dimensional or empty. So, we will be missing (at most) finitely many parameter values in $\projdos$. On the other hand if $\cB=\emptyset$, since $\cS$ is irreducible and $\cP$ regular, then $\cP(\projdos)=\cS$ (see e.g. Theorem 2, page 57, in \cite{shafa}). Therefore, if $\cB=\emptyset$, our method will determine all singularities of $\cS$. However, if $\cB\neq \emptyset$ the method will generate all singularities in
the dense set $\cP(\projdos\setminus \cB) \subset \cS$. For avoiding this deficiency one may consider reparametrizing normally the parametrization, however this not an easy task (see \cite{normalidad}). We do not deal with this issue in this paper.

\para

Our method is based on the generalization of the ideas in \cite{Sonia-sing} in combination with the results in \cite{PS-grado} and \cite{PS-ISSAC} that perform the computations without implicitizing. Intuitively speaking, the method works as follows; see Section \ref{sec-notation} for further details. First we state a formula for computing the multiplicity of an affine point w.r.t. an affine surface (see Section \ref{sec-formula}). Then,  we analyze the multiplicity of the (affine) parameter values of the form $(\ttt_0:\sss_0:1)$ to later study the parameter values (at infinity) of the form $(\ttt_0:\sss_0:0)$. In order to compute $\mult((\ttt_0:\sss_0:1))$ we consider the four affine rational parametrizations (we call them $\cPx,\ldots,\cPw$) generated by $\cP(\ttt,\sss,\vvv)$ by dehomogenizing w.r.t.  the first, second, third and fourth component of the parametrization, respectively and taking $\vvv=1$. Then, we apply the multiplicity formula to $(\ttt_0:\sss_0:1)$ via $\cPw$. This first attempt will classify all affine parameter values  with the exception of a proper closed set, and hence with the exception of finitely many component of dimension either 1 or 0. By using consecutively $\cPz,\cPy$ and $\cPx$ we achieve the multiplicity of all affine parameter values not covered by $\cPw$ and not being base points (see Section \ref{sec-sing}). Finally we deal with the parameter values at infinity with a similar strategy but dehomogenizing with either $\ttt=1$ or $\sss=1$.

\para

The structure of the paper is as follows. In Section \ref{sec-notation} we introduce the notation as well as the general assumptions that essentially imposed that $\cS$ is not a plane. In Section \ref{sec-formula} we state the multiplicity formula, we develop a method for computing a point not on the surface (this will be needed in the algorithm), starting from the parametric representation and without implicitizing. Moreover, we briefly recall some procedures from \cite{PS-grado} and \cite{PS-ISSAC}. Sections \ref{sec-sing} and \ref{sec-singular-infinity} deal, respectively, with the  affine $\cP$-singularities and the $\cP$-singularities at infinity. Section \ref{sec-algorithm} summarizes all the ideas by deriving an algorithm. Also, a complete example is given. Sections \ref{sec-sing} and \ref{sec-singular-infinity} contain the technicalities of the theoretical argumentation of our method. A reader not interested in that theoretical argumentation might skip these sections to go to Section  \ref{sec-algorithm} to directly apply the procedure.

\section{Notation, general assumptions and strategy}\label{sec-notation}

In this section we introduce the notation that will be used throughout the paper, as well as the general assumptions.

 \para

 $\K$ is an algebraically closed field of characteristic zero, and $\projdos, \projtres$ are the projective plane and projective space over $\K$, respectively. Let $(x_1:x_2:x_3:x_4)$ be the projective coordinates in $\projtres$

 \para

Let $\cS\subset \projtres$ be a  rational projective surface different   of the planes $x_i=0$. This is not a loss of generality because, in that case, the surface is smooth.
In addition, let $\cP(\ttt,\sss,\vvv)$ be a rational projective parametrization of $\cS$.  We consider that $\cP(\ttt,\sss,\vvv)$ is expressed as
\[ \cP(\ttt,\sss,\vvv)=\left(\cpp_1(\ttt,\sss,\vvv):\cpp_2(\ttt,\sss,\vvv):\cpp_3(\ttt,\sss,\vvv):\cpp_4(\ttt,\sss,\vvv)\right)\]
where $\gcd(\cpp_1,\cpp_2,\cpp_3,\cpp_4)=1$ and the four polynomials are homogeneous (note that none of them is zero) of the same degree. We say the $(\ttt_0:\sss_0:\vvv_0)\in \projdos$ is a {\sf (projective) base point} of $\cP(\ttt,\sss,\vvv)$ if $\cpp_1(\ttt_0,\sss_0,\vvv_0)=\cpp_2(\ttt_0,\sss_0,\vvv_0)=\cpp_3(\ttt_0,\sss_0,\vvv_0)=\cpp_4(\ttt_0,\sss_0,\vvv_0)=0$. We denote by $\cB$ the set of (projective) base points of $\cP(\ttt,\sss,\vvv)$. Since $\cB$ is the intersection of the projective curves defined by $\cpp_i(\ttt,\sss,\vvv)$, and since $\gcd(\cpp_1,\cpp_2,\cpp_3,\cpp_4)=1$, we get  the following lemma.

\para

\begin{lemma}\label{lemma-ptos-base-cardinal}
$\card(\cB)<\infty$.
 \end{lemma}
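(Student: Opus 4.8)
The plan is to argue by dimension: I would show that $\cB$, being the common zero locus of homogeneous polynomials whose gcd is $1$, cannot contain a one-dimensional component, and hence must be finite. Throughout I rely only on standard dimension theory for projective varieties together with the hypotheses already recorded in the excerpt, namely that the $\cpp_i$ are nonzero homogeneous polynomials with $\gcd(\cpp_1,\cpp_2,\cpp_3,\cpp_4)=1$ over the algebraically closed field $\K$.

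First I would record that $\cB=\bigcap_{i=1}^4 V(\cpp_i)$ is a projective algebraic subset of $\projdos$, and that it is a proper subset since none of the $\cpp_i$ is the zero polynomial. Consequently $\dim\cB\le 1$, and $\cB$ decomposes into finitely many irreducible components, each of dimension $0$ or $1$. A zero-dimensional component is a single point, so finiteness of $\cB$ is equivalent to the absence of one-dimensional components; this reduces the lemma to ruling out curves inside $\cB$.

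Next, I would suppose toward a contradiction that $\cB$ has an irreducible component $\cc$ of dimension $1$. Being an irreducible curve in $\projdos$, $\cc$ is a hypersurface, so $\cc=V(f)$ for some irreducible homogeneous $f\in\K[\ttt,\sss,\vvv]$. The inclusions $\cc\subseteq\cB\subseteq V(\cpp_i)$ give $V(f)\subseteq V(\cpp_i)$ for each $i$; by the projective Nullstellensatz (using that $\K$ is algebraically closed) together with the primality of the ideal $(f)$, this forces $\cpp_i\in(f)$, that is, $f\mid\cpp_i$ for every $i$. Hence $f$ is a common factor of $\cpp_1,\ldots,\cpp_4$, contradicting $\gcd(\cpp_1,\cpp_2,\cpp_3,\cpp_4)=1$.

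Therefore $\cB$ has only zero-dimensional components and is finite. I expect the step requiring the most care to be the passage from the set-theoretic inclusion $V(f)\subseteq V(\cpp_i)$ to the divisibility $f\mid\cpp_i$: this is precisely where the Nullstellensatz and the fact that $\K[\ttt,\sss,\vvv]$ is a unique factorization domain enter, and where the algebraic closedness of $\K$ is genuinely used. The remaining ingredients, the decomposition into components and the identification of a plane curve with a hypersurface, are routine.
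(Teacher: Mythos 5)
Your argument is correct and is essentially the paper's own justification (the paper merely remarks that $\cB$ is the intersection of the projective curves $V(\cpp_i)$ with $\gcd(\cpp_1,\ldots,\cpp_4)=1$ and leaves the details implicit); you have simply filled in the dimension count and the Nullstellensatz step showing a one-dimensional component would force a common factor. No gaps.
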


\para

Our strategy will reduce the problem to the affine case to afterwards  analyze the points at infinity. We  denote by $\cS_{x_i}$ the affine surface obtained by dehomogenization of  $\cS$ with $x_i=1$; note that $\cS$ is not the plane $x_i=0$. Also, we denote by $\cP_{x_i}(t,s)$  the corresponding affine parametrization obtained from $\cP(\ttt,\sss,\vvv)$.
More precisely (say $i=4$, and that $\gamma=\deg(\cpp_i)$), $\cP(\ttt,\sss,\vvv)$ can be replaced by (note that $\cpp_4\neq 0$)
\[\left(\frac{\cpp_1(\ttt,\sss,\vvv)}{\cpp_4(\ttt,\sss,\vvv)}:\frac{\cpp_2(\ttt,\sss,\vvv)}{\cpp_4(\ttt,\sss,\vvv)}:
\frac{\cpp_3(\ttt,\sss,\vvv)}{\cpp_4(\ttt,\sss,\vvv)}:1\right)= \]
\[
\left(\frac{\vvv^\gamma \cpp_1(\frac{\ttt}{\vvv},\frac{\sss}{\vvv},1)}{\vvv^\gamma \cpp_4(\frac{\ttt}{\vvv},\frac{\sss}{\vvv},1)}:\frac{\vvv^\gamma \cpp_2(\frac{\ttt}{\vvv},\frac{\sss}{\vvv},1)}{\vvv^\gamma \cpp_4(\frac{\ttt}{\vvv},\frac{\sss}{\vvv},1)}:\frac{\vvv^\gamma \cpp_3(\frac{\ttt}{\vvv},\frac{\sss}{\vvv},1)}{\vvv^\gamma \cpp_4(\frac{\ttt}{\vvv},\frac{\sss}{\vvv},1)}:1\right)=\left(\frac{\cpp_1(\frac{\ttt}{\vvv}:\frac{\sss}{\vvv}:1)}{\cpp_4(\frac{\ttt}{\vvv}:\frac{\sss}{\vvv},1)},
\frac{\cpp_2(\frac{\ttt}{\vvv},\frac{\sss}{\vvv},1)}{\cpp_4(\frac{\ttt}{\vvv},\frac{\sss}{\vvv},1)}:\frac{\cpp_3(\frac{\ttt}{\vvv},\frac{\sss}{\vvv},1)}{ \cpp_4(\frac{\ttt}{\vvv},\frac{\sss}{\vvv},1)}:1\right) \]
So, introducing the notation $\{t=\frac{\ttt}{\vvv},s=\frac{\sss}{\vvv}\}$ and $\tilde{p}_i(t,s)=\cpp_i(t,s,1)$, we have that
\begin{multicols}{2}
\begin{itemize}
\item[] $\cPx(t,s)=\left(\dfrac{\tilde{p}_2(t,s)}{\tilde{p}_1(t,s)},\dfrac{{\tilde{p}}_3(t,s)}{\tilde{p}_1(t,s)},\dfrac{\tilde{p}_4(t,s)}{\tilde{p}_1(t,s)}  \right)$,
\item[] $\cPy(t,s)=\left(\dfrac{\tilde{p}_1(t,s)}{\tilde{p}_2(t,s)},\dfrac{\tilde{p}_3(t,s)}{\tilde{p}_2(t,s)},\dfrac{\tilde{p}_4(t,s)}{\tilde{p}_2(t,s)}  \right)$,
\item[] $\cPz(t,s)=\left(\dfrac{\tilde{p}_1(t,s)}{\tilde{p}_3(t,s)},\dfrac{\tilde{p}_2(t,s)}{\tilde{p}_3(t,s)},\dfrac{\tilde{p}_4(t,s)}{\tilde{p}_3(t,s)}  \right)$,
\item[] $\cPw(t,s)=\left(\dfrac{\tilde{p}_1(t,s)}{\tilde{p}_4(t,s)},\dfrac{\tilde{p}_2(t,s)}{\tilde{p}_4(t,s)},\dfrac{\tilde{p}_3(t,s)}{\tilde{p}_4(t,s)}  \right)$.
 \end{itemize}
\end{multicols}
\noindent  Observe that, since $\gcd(\cpp_1,\cpp_2,\cpp_3,\cpp_4)=1$, then $\gcd(\tilde{p}_1,\tilde{p}_2,\tilde{p}_3,\tilde{p}_4)=1$. Furthermore,
note that $\cP_{x_i}(t,s)$ is a rational parametrization of the affine rational surface $\cS_{x_i}$.
Analogously, we say that $(t_0,s_0)\in \K^2$ is an {\sf (affine) base point} of $\cP(\ttt,\sss,\vvv)$ if $\cpp_1(t_0,s_0,1)=\cpp_2(t_0,s_0,1)=\cpp_3(t_0,s_0,1)=\cpp_4(t_0,s_0,1)=0$. Let us denote by $\cB_a$ the set of affine base points of $\cP(\ttt,\sss,\vvv)$. Observe that $\cB_a$ can be  naturally embedded in $\cB$.

\para

Furthermore, we will consider that the rational functions in $\cP_{x_i}(t,s)$ are expressed in reduced form. So, in the sequel, we also use the following notation
\begin{multicols}{2}
\begin{itemize}
\item[] $\cPx(t,s)=\left(\dfrac{{p}_2(t,s)}{{q}_{1,1}(t,s)},\dfrac{{{p}}_3(t,s)}{{q}_{1,2}(t,s)},\dfrac{{p}_4(t,s)}{{q}_{1,3}(t,s)}  \right)$,
\item[] $\cPy(t,s)=\left(\dfrac{{p}_1(t,s)}{{q}_{2,1}(t,s)},\dfrac{{p}_3(t,s)}{{q}_{2,2}(t,s)},\dfrac{{p}_4(t,s)}{{q}_{2,3}(t,s)}  \right)$,
\item[] $\cPz(t,s)=\left(\dfrac{{p}_1(t,s)}{{q}_{3,1}(t,s)},\dfrac{{p}_2(t,s)}{{q}_{3,2}(t,s)},\dfrac{{p}_4(t,s)}{{q}_{3,3}(t,s)}  \right)$,
\item[] $\cPw(t,s)=\left(\dfrac{{p}_1(t,s)}{{q}_{4,1}(t,s)},\dfrac{{p}_2(t,s)}{{q}_{4,2}(t,s)},\dfrac{{p}_3(t,s)}{{q}_{4,3}(t,s)}  \right)$.
\end{itemize}
\end{multicols}
\noindent where all rational functions are in reduced form. As shown in the next lemma, the lcm of the denominators generates $\cB_a$.

\para

\begin{lemma}\label{lemma-ptos-base-afines}
$\cB_a=\bigcap_{i=1}^{4} \{(t_0,s_0)\in \K^2\,|\,\lcm(q_{i,1},q_{i,2},q_{i,3})=0\}$.
\end{lemma}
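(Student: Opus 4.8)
The plan is to prove the two inclusions separately, the key preliminary observation being that each reduced denominator $q_{i,j}$ divides $\tilde p_i$. Indeed, by construction the three denominators $q_{i,1},q_{i,2},q_{i,3}$ of $\cP_{x_i}$ are exactly the reduced denominators of the quotients $\tilde p_k/\tilde p_i$ as $k$ ranges over $\{1,2,3,4\}\setminus\{i\}$, so they are the polynomials $\tilde p_i/\gcd(\tilde p_k,\tilde p_i)$ and in particular each of them divides $\tilde p_i$. Moreover, since $\lcm(q_{i,1},q_{i,2},q_{i,3})$ and the product $q_{i,1}q_{i,2}q_{i,3}$ have the same radical, a point $(t_0,s_0)$ annihilates $\lcm(q_{i,1},q_{i,2},q_{i,3})$ if and only if it annihilates at least one of $q_{i,1},q_{i,2},q_{i,3}$; this reduces every set on the right-hand side to a union of three zero sets.

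First I would dispatch the inclusion $\supseteq$, which is routine. Let $(t_0,s_0)$ lie in the right-hand intersection. Then for every $i\in\{1,\dots,4\}$ some $q_{i,j}$ vanishes at $(t_0,s_0)$, and since $q_{i,j}\mid\tilde p_i$ this forces $\tilde p_i(t_0,s_0)=0$. As this holds for all four indices $i$, the point is a common zero of $\tilde p_1,\dots,\tilde p_4$, i.e. it belongs to $\cB_a$.

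The harder inclusion $\subseteq$ is where I would concentrate the effort, and the hypothesis $\gcd(\tilde p_1,\tilde p_2,\tilde p_3,\tilde p_4)=1$ is precisely what drives it. Fix $(t_0,s_0)\in\cB_a$ and an index $i$; the goal is to exhibit some $k\neq i$ with $(\tilde p_i/\gcd(\tilde p_k,\tilde p_i))(t_0,s_0)=0$. Passing to irreducible factorizations and writing $v_\phi(\cdot)$ for the multiplicity of an irreducible $\phi$ in a polynomial, the identity $q_{i,k}=\tilde p_i/\gcd(\tilde p_k,\tilde p_i)$ gives $v_\phi(q_{i,k})=\max\{0,\,v_\phi(\tilde p_i)-v_\phi(\tilde p_k)\}$, so $\phi$ divides $q_{i,k}$ exactly when $v_\phi(\tilde p_i)>v_\phi(\tilde p_k)$. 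Suppose, for contradiction, that no denominator of $\cP_{x_i}$ vanishes at the point; then every irreducible $\phi$ with $\phi(t_0,s_0)=0$ satisfies $v_\phi(\tilde p_i)\leq v_\phi(\tilde p_k)$ for all $k\neq i$. But $(t_0,s_0)\in\cB_a$ gives $\tilde p_i(t_0,s_0)=0$, so some irreducible factor $\phi_0$ of $\tilde p_i$ vanishes there, with $v_{\phi_0}(\tilde p_i)\geq 1$; for that $\phi_0$ the inequalities yield $v_{\phi_0}(\tilde p_k)\geq v_{\phi_0}(\tilde p_i)\geq 1$ for every $k$, i.e. $\phi_0$ divides all of $\tilde p_1,\dots,\tilde p_4$, contradicting their coprimality. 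Hence some denominator of $\cP_{x_i}$ vanishes at $(t_0,s_0)$, so $\lcm(q_{i,1},q_{i,2},q_{i,3})(t_0,s_0)=0$; running this over all $i$ places the point in the intersection.

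The main obstacle I anticipate is exactly this second inclusion, because of a dimension gap: each set $\{\lcm(q_{i,1},q_{i,2},q_{i,3})=0\}$ is a (possibly reducible) plane curve, whereas $\cB_a$ is finite by Lemma \ref{lemma-ptos-base-cardinal}, so the content of the statement is that intersecting the four curves collapses them precisely onto the finite base locus. The coprimality hypothesis is indispensable and enters only in this step, and the cleanest bookkeeping of the cancellations is, as above, through the valuations $v_\phi$ rather than through direct manipulation of $\gcd$'s.
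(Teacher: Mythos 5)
Your proof is correct and rests on the same two facts as the paper's own argument: each reduced denominator $q_{i,j}$ divides $\tilde{p}_i$, and a common irreducible factor of all four $\tilde{p}_k$ would contradict $\gcd(\tilde{p}_1,\ldots,\tilde{p}_4)=1$. The paper packages this more compactly by proving the polynomial identity $\lcm(q_{i,1},q_{i,2},q_{i,3})=\tilde{p}_i$ (your valuation bookkeeping is essentially a pointwise version of that divisibility-and-cofactor argument), after which the set equality is immediate.
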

\begin{proof}  Let $\Delta_i:=\lcm(q_{i,1},q_{i,2},q_{i,3})$. We prove that $\Delta_i=\tilde{p}_i$, and from there the proof is trivial. Let us assume that $\Delta_1\neq \tilde{p}_1$; similarly for the others. Clearly $\Delta_1$ divides $\tilde{p}_1$. So, there exists a non-trivial factor $H_1$ such that $\tilde{p}_1=\Delta_1 H_1$. Therefore, by construction, $H_1$ divides $\tilde{p}_k$, with $k=2,3,4$. But this is a contradiction, since $\gcd(\tilde{p}_1,\ldots,\tilde{p}_4)=1$.
\end{proof}

\para

Furthermore, if $\Theta:\K^n \rightarrow \K^m$ is a rational affine map, we denote by $\deg(\Theta)$ the degree of the map $\Theta$ (see e.g.
\cite{shafa} pp.143, or \cite{Harris:algebraic} pp.80). In particular, $\deg(\cP_{x_i})$ denotes the degree of the rational map $\cP_{x_i}$ induced by the rational parametrization $\cP_{x_i}(t,s)$.

\para

Also, for a rational function $\chi$ we denote by $\numer(\chi)$ the numerator of $\chi$ when expressed in reduced form. By $\pp_{\{v_1,\ldots,v_n\}}(f)$ and $\content_{\{v_1,\ldots,v_n\}}(f)$, where $f\in \K[x_1,\ldots,x_m][v_1,\ldots,v_n]$, we denote the primitive part and the content w.r.t. $\{v_1,\ldots,v_n\}$ of $f$, respectively. For $f,g$ polynomials depending on $s$ we denote $\res_s(f,g)$ its resultant.

\para

For $P\in \projtres$, we represent by $\mult(P,\cS)$ the multiplicity of $P$ on $\cS$; note that if $P$ can be seen in the same affine space as $\cS_{x_i}$ then $\mult(P,\cS)=\mult(P,\cS_{x_i})$.

\para

\begin{center}
{\sf General assumption}
\end{center}

\para

We   assume that for every two different polynomials
$\cpp_i,\cpp_j$ it does not exist $\lambda\in \K$ such that $\cpp_i=\lambda \cpp_j$. Note that if  $\cpp_i=\lambda \cpp_j$, then $\cS$ is the plane of equation $x_i-\lambda x_j=0$, and the problem is trivial. In addition, note that this requirement implies that none of the dehomogenizations $\cS_{x_i}$  is empty ($\cpp_i\neq 0$ and hence $\cS$ is not the plane $x_i=0$). Moreover, $\cS_{x_i}$ is not   a plane  parallel to any of the affine coordinate plane in $\K^3$. So this does not imply any loss of generality.

\para

\begin{center}
{\sf Strategy}
\end{center}

\para

We briefly describe here the  ideas of our strategy. The precise details on how to execute them will come in the subsequence sections.
The main steps in our strategy are as follows; we recall that our goal is to decompose $\projdos$ such that for $(t_0:s_0:v_0)\in \projdos$ we know whether $\cP(t_0,s_0,v_0)$ is singular or simple in $\cS$, and if it is singular we also want to determine its multiplicity.
\begin{enumerate}
\item First we analyze the parameter values of the form $(t_0:s_0:1)$. For that, we work with $\cPw(t, s)$ and we treat the problem in $\K^2$. At this stage, we will be able to give an answer for  $\K^2\setminus \{(t_0,s_0)\in \K^2\,|\,\lcm(q_{4,1},q_{4,2},q_{4,3})=0\}$. Repeating the process (not necessarily for the whole affine plane, but for those parameters values that are not yet under control) with $\cPz(t,s)$, and if necessary with $\cPy(t,s)$ and $\cPx(t,s)$ we will be able to give an answer for (see Lemma \ref{lemma-ptos-base-afines})
    $$\displaystyle{\K^2\setminus \bigcap_{i=1}^{4} \{(t_0,s_0)\in \K^2\,|\,\lcm(q_{i,1},q_{i,2},q_{i,3})=0\}=\K^2\setminus \cB_a.}$$
  \item We analyze the case of  $(0:1:0)$, checking first whether $(0:1:0)\in \cB$.
\item We analyze the case of the parameter values $(1:\lambda:0)$. First we find those $\lambda$ values generating base points. Afterwards, we study (under a suitable dehomogenization) the rest of the points.
\end{enumerate}

\section{The multiplicity formula}\label{sec-formula}

In this section we state a formula for computing the multiplicity of a point in $\K^3$ w.r.t. an affine rational surface in $\K^3$, when a rational parametrization (not necessarily proper) is provided. As a consequence, we give a criterion  for an affine point to be regular on the affine rational surface. In order to derive an algorithmic version of these results, we will recall some procedures in \cite{PS-grado},\cite{PS-ISSAC} and \cite{JSC08}, and we will
present a method for determining a point out of the surface without knowing the implicit equation.

\para

For that purpose, throughout this section,
$\cZ\subset \K^3$ is a rational affine surface and
\[ \cQ(t,s)=\left(\frac{N_1(t,s)}{D_1(t,s)},\frac{N_2(t,s)}{D_2(t,s)},\frac{N_3(t,s)}{D_3(t,s)} \right) \]
a rational parametrization (in reduced form) of $\cZ$;  we assume w.l.o.g. that   $\cZ$ is not a plane parallel to the coordinate planes of $\K^3$.

\para

For any $A=(a,b,c)$ point of $\K^3$, let $f(x,y,z)$ be the defining polynomial of $\cZ$ and $F(x,y,z,w)$ its homogenization.  We consider the polynomial $g(x,y,z)=f(x+a,y+b,z+c)$, as well as $G(x,y,z,w)=F(x+aw,y+bw,z+cw,w)$. It is clear that
\[ \mult(A,\cZ)=\deg(G)-\deg_{w}(G). \]
On the other hand, note that
\[ \left(\frac{N_1(t,s)}{D_1(t,s)}-a,\frac{N_2(t,s)}{D_2(t,s)}-b,\frac{N_3(t,s)}{D_3(t,s)}-c,1 \right) \]
parametrizes the projective surface defined by $G$. Therefore, since we have assumed that $\cZ$ is not a plane parallel to the coordinate planes, then $N_1/D_1-a\neq 0$ and hence
\[ \cQ^*(t,s)=\left(\dfrac{N_2(t,s)-b D_2(t,s)}{N_1(t,s)-a D_1(t,s)} \cdot\dfrac{D_1(t,s)}{D_2(t,s)}, \dfrac{N_3(t,s)-c D_3(t,s)}{N_1(t,s)-a D_1(t,s)}\cdot \dfrac{D_1(t,s)}{D_3(t,s)}, \frac{D_1(t,s)}{N_1(t,s)-a D_1(t,s)} \right) \]
parametrizes the affine surface defined by $G(1,y,z,w)$; note that, since $G$ is homogeneous, $\deg_w(G)=\deg_w(G(1,y,z,w))$. Let us introduce the following notation
\[\Phi_{2,3}(A)(t,s)=\left(\dfrac{N_2(t,s)-b D_2(t,s)}{N_1(t,s)-a D_1(t,s)} \cdot\dfrac{D_1(t,s)}{D_2(t,s)}, \dfrac{N_3(t,s)-c D_3(t,s)}{N_1(t,s)-a D_1(t,s)}\cdot \dfrac{D_1(t,s)}{D_3(t,s)} \right) \]
and let $\Phi_{2,3}(A):  \K^2 \rightarrow  \K^2 $  be the induced map. Moreover, if for $i=1,2$, $\chi_{i}^{A}(t,s)$ denotes the $i$-component of $\Phi_{2,3}(A)(t,s)$,
let $\ggg_{i}^{\cQ,A}$ be the polynomial
\[ \ggg_{i}^{\cQ,A}(t,s,h_1,h_2)=\numer(\chi_{i}^{A}(t,s)-\chi_{i}^{A}(h_1,h_2)),\,\,\,i=1,2\]
where $h_1,h_2$ are new variables, and let $K(t,s,h_1,h_2)=\gcd(\ggg_{1}^{\cQ,A},\ggg_{2}^{\cQ,A})$ where the  gcd in computed in $\K[h_1,h_2][t,s]$. Then we introduce the polynomial
\[\ggg^{\cQ,A}(t,s,h_1,h_2)=\left\{\begin{array}{lll}
K(t,s,h_1,h_2) & &\mbox{if $\deg_{\{t,s\}}(K)>0$} \\
1 & & \mbox{if  $\deg_{\{t,s\}}(K)=0$} \end{array}\right. \]
\begin{remark}\label{remark-jac}
We observe that if $\ggg^{\cQ,A}=1$ then the determinant of the Jacobian of $\Phi_{2,3}(A)(t,s)$ is not identically zero
(see the preliminary paragraphs to Theorem 1 in \cite{PS-ISSAC}).
\end{remark}

\para
\noindent
In the following theorem and corollaries we assume that:
\begin{enumerate}
\item  none of the projective curves defined by each of the non-constant polynomials in $\{N_1,N_2,N_3,D_1,D_2,D_3\}$ passes through $(0:1:0)$
\item for each $A=(a,b,c)\in \K^3$ (similarly $A_0$)  none of the projective curves defined by the each of the non-constant polynomials in $\{N_2-bD_2,N_3-cD_3,N_1-aD_1\}$ passes through $(0:1:0)$.
  \end{enumerate}
Note that, if necessary, one can always perform a suitable polynomial linear change of parameters.

\para

\begin{theorem}\label{th-formula} {\sf (The general formula)}
It holds that
\begin{enumerate}
\item
$\mult(A,\cZ)=\deg(\cZ)$ iff $\ggg^{\cQ,A}\neq 1$.
\item $\mult(A,\cZ)<\deg(\cZ)$ iff $\ggg^{\cQ,A}=1$. Furthermore, if $\ggg^{\cQ,A}=1$
  then
 $$\deg(\cZ)-\mult(A,\cZ)=\dfrac{\degree(\Phi_{23}(A))}{\degree(\cQ)}, $$
\end{enumerate}
\end{theorem}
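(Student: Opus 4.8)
The plan is to connect the multiplicity of $A$ on $\cZ$ with the degree of the fibres of the auxiliary map $\Phi_{2,3}(A)$, exploiting the parametrization $\cQ^*$ of the translated-and-projected surface $G(1,y,z,w)$ that was constructed in the preliminary paragraphs. The geometric heart of the matter is the identity $\mult(A,\cZ)=\deg(G)-\deg_w(G)$, already recorded above. The translation sends $A$ to the origin, and $\deg_w(G)$ measures exactly how the variety $\{G=0\}$ meets the hyperplane at infinity $w=0$ through the direction corresponding to the origin; equivalently $\deg(G)-\deg_w(G)$ is the intersection multiplicity of $\cZ$ at $A$ along a generic line through $A$, which is the classical meaning of multiplicity. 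Thus I first want to reinterpret $\deg_w(G)$ as the degree of the projective surface defined by $G(1,y,z,w)$, i.e. as $\deg(\cZ)$ minus the drop caused by the point $A$ lying on $\cZ$.

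Next I would invoke the degree formula for rational maps: for a dominant rational map parametrizing a surface, the degree of the image surface times the degree of the map equals the generic number of preimages weighted appropriately, and more precisely for the projected parametrization one has that $\deg(\text{image of } \Phi_{2,3}(A)) \cdot \deg(\Phi_{2,3}(A))$ relates to $\deg(G(1,y,z,w))$. Concretely, $\Phi_{2,3}(A)$ is the composition of $\cQ^*$ with the linear projection forgetting the last coordinate, and the surface defined by $G(1,y,z,w)$ projects to a plane curve (or fills the plane) under this projection. The key algebraic input is Remark \ref{remark-jac}: the condition $\ggg^{\cQ,A}=1$ is equivalent to the Jacobian of $\Phi_{2,3}(A)$ being generically nonsingular, which is exactly the condition that $\Phi_{2,3}(A)$ is dominant onto $\K^2$ (rather than collapsing the image to a curve). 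This dichotomy drives the two cases of the theorem.

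For part (1), I would argue that $\ggg^{\cQ,A}\neq 1$ forces the image of $\Phi_{2,3}(A)$ to be lower-dimensional, which geometrically means that the projection of $\{G(1,y,z,w)=0\}$ onto the $(y,z)$-plane degenerates; this happens precisely when $\deg_w(G)=0$, i.e. when every point of $\cZ$ at infinity in the relevant direction is absorbed, equivalently $A$ is a point of maximal multiplicity $\mult(A,\cZ)=\deg(\cZ)$. For part (2), assuming $\ggg^{\cQ,A}=1$ so that $\Phi_{2,3}(A)$ is dominant, I would apply the multiplicativity of degrees: since $\Phi_{2,3}(A)$ factors through $\cQ^*$, and since $\deg(\cZ)-\mult(A,\cZ)$ equals $\deg_w(G)=\deg(G(1,y,z,w))$, the fibre-counting argument yields $\deg(G(1,y,z,w))\cdot\deg(\cQ)=\deg(\Phi_{2,3}(A))$, which rearranges into the stated formula $\deg(\cZ)-\mult(A,\cZ)=\deg(\Phi_{2,3}(A))/\deg(\cQ)$. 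Here I would lean on the assumptions (1) and (2) on $(0:1:0)$ to guarantee that no spurious contributions at infinity corrupt the degree bookkeeping of $\cQ^*$.

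The main obstacle I anticipate is the careful justification that $\deg_w(G)$ equals the degree of the image surface of $\cQ^*$ (equivalently the degree of $\Phi_{2,3}(A)$ divided by $\deg(\cQ)$), and in particular controlling the behaviour at infinity so that the degree of the projected parametrization is counted without loss. The denominators $D_1,D_2,D_3$ and the numerator shifts $N_i - aD_i$ can introduce base points or drop the degree, and the hypotheses on $(0:1:0)$ are precisely what is needed to ensure a generic line in the relevant direction meets the surface transversally and that the resultant-based degree computation hidden in $\ggg^{\cQ,A}$ faithfully reflects the number of preimages. Establishing this clean correspondence between the algebraic gcd/Jacobian condition and the geometric degree count is where the real work lies; the rest is a rearrangement of the intersection-theoretic identity $\mult(A,\cZ)=\deg(G)-\deg_w(G)$.
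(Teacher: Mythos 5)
Your argument is correct and follows essentially the same route as the paper: the identity $\mult(A,\cZ)=\deg(G)-\deg_w(G)$, the dichotomy on $\ggg^{\cQ,A}$ for part (1), and the multiplicativity $\deg_w(G)\cdot\degree(\cQ^*)=\degree(\Phi_{2,3}(A))$ for part (2), except that the paper simply outsources these last two facts to Theorem 4 of \cite{PS-ISSAC} and Theorem 6 of \cite{JSC08} (together with $\degree(\cQ)=\degree(\cQ^*)$) rather than sketching their proofs as you do. The only blemish is your line $\deg_w(G)=\deg(G(1,y,z,w))$, which should read $\deg_w(G)=\deg_w(G(1,y,z,w))$ --- the fibre count of the projection onto the $(y,z)$-plane gives the partial degree in $w$, not the total degree --- but this does not affect the substance of the argument.
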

\begin{proof} (1) By Theorem 4 in \cite{PS-ISSAC}, $\deg_w(G)=0$ iff $\ggg^{\cQ,A}\neq 1$. Hence, $\mult(A,\cZ)=\deg(\cZ)$ iff $\ggg^{\cQ,A}\neq 1$.
\\
\noindent (2) All hypothesis of
Theorem 6 in \cite{JSC08} are satisfied. Therefore, $\deg_w(G)=\frac{\degree(\Phi_{23}(A))}{\degree(\cQ^*)}$, and the result follows by taking into account that $\deg(\cQ)=\deg(\cQ^*)$.
 \end{proof}

\begin{remark}\label{rem-A0} Note that:
\begin{enumerate}
\item  If $\ggg^{\cQ,A}\neq 1$, by Theorem \ref{th-formula} (1), then $A\in \cZ$.
\item For $A\not\in \cZ$, {$\deg(\Phi_{2,3}(A))$} is invariant, indeed it is $\deg(\cZ) {\degree(\cQ)}$.
\item Let $A=(a,b,c)$ and assume that $a\neq 0$; similarly for $b$ and $c$. We embed $A$ in $\projtres$ as $(a:b:c:1)$. Let $\overline{\cZ}$ be
 the projective closure of $\cZ$, and let $\overline{\cZ}_{x_1}$ the dehomogenization of $\overline{\cZ}$ w.r.t. $x_1=1$. Let $\cQ_{x_1}$ be the corresponding parametrization of $\overline{\cZ}_{x_1}$ generated by $\cQ$.
 Then $\deg(\cZ)=\deg(\overline{\cZ}_{x_1})$, $\mult((a,b,c),\cZ)=\mult((b/a,c/a,1/a),\overline{\cZ}_{x_1})$ and $\deg(\cQ)=\deg(\cQ_{x_1})$. Therefore, $\deg(\Phi_{2,3}((a,b,c)))=\deg(\Phi_{2,3}(b/a,c/a,1/a)))$.

\end{enumerate}
\end{remark}

\para

\begin{corollary}\label{cor-1-planos}
$\cZ$ is a plane if and only if  there exists a non-empty dense subset $\Omega$ of $\cZ$ such that for each $A\in \Omega$,  $\ggg^{\cQ,A}\neq 1$.
\end{corollary}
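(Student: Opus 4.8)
The plan is to prove Corollary \ref{cor-1-planos} by exploiting Theorem \ref{th-formula} together with the elementary fact that $\mult(A,\cZ)=\deg(\cZ)$ precisely when $\cZ$ is a plane and $A\in\cZ$. First I would establish the forward implication. Suppose $\cZ$ is a plane. Then $\deg(\cZ)=1$, and for every point $A\in\cZ$ we trivially have $\mult(A,\cZ)=1=\deg(\cZ)$. By part (1) of Theorem \ref{th-formula}, $\mult(A,\cZ)=\deg(\cZ)$ is equivalent to $\ggg^{\cQ,A}\neq 1$. Hence taking $\Omega=\cZ$ itself (which is non-empty and trivially dense in $\cZ$) yields $\ggg^{\cQ,A}\neq 1$ for every $A\in\Omega$, as required.

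For the converse, suppose there is a non-empty dense subset $\Omega\subset\cZ$ with $\ggg^{\cQ,A}\neq 1$ for each $A\in\Omega$. By part (1) of Theorem \ref{th-formula}, this means $\mult(A,\cZ)=\deg(\cZ)$ for every $A\in\Omega$. Now the key geometric fact is that for an irreducible surface $\cZ$ of degree $d$, the multiplicity of any point is at most $d$, with equality forcing very special behavior: a point of multiplicity $d$ on a degree-$d$ surface is a point through which every line meeting the surface locally does so with full contact, and such a point can only exist if $d=1$, i.e. $\cZ$ is a plane. Concretely, if $\cZ$ had degree $d>1$, then the locus of points of multiplicity $d$ would be contained in a proper closed subset (the singular locus cannot be dense in an irreducible surface of degree larger than one, since a generic point is smooth with multiplicity $1<d$). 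This contradicts the density of $\Omega$. Therefore $\deg(\cZ)=1$, and since $\cZ$ is a surface (hence two-dimensional and irreducible) of degree one, it must be a plane.

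The step I expect to be the main obstacle is making rigorous the claim that density of the maximal-multiplicity locus forces $\deg(\cZ)=1$. The clean way to argue this is: a generic (smooth) point $A_0$ of the irreducible surface $\cZ$ has $\mult(A_0,\cZ)=1$; if $\deg(\cZ)>1$ then such generic points satisfy $\mult(A_0,\cZ)=1<\deg(\cZ)$, so by Theorem \ref{th-formula} they have $\ggg^{\cQ,A_0}=1$. Since the smooth points form a non-empty open dense subset $U\subset\cZ$, and $\Omega$ is dense, $\Omega\cap U\neq\emptyset$; any point in this intersection would have to satisfy both $\ggg^{\cQ,A}\neq 1$ (from membership in $\Omega$) and $\ggg^{\cQ,A}=1$ (from smoothness), a contradiction. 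Hence $\deg(\cZ)=1$. I would be careful here to invoke only that the smooth locus of an irreducible variety is open and dense (standard), and that smooth points have multiplicity one, so that the whole argument reduces cleanly to the dichotomy already packaged in Theorem \ref{th-formula}.
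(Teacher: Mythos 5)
Your proof is correct and takes essentially the same route as the paper, whose entire proof is the one-line remark that the statement follows from Theorem \ref{th-formula}(1) together with the irreducibility of $\cZ$; your final paragraph (generic points of an irreducible surface are smooth, hence of multiplicity $1<\deg(\cZ)$ if $\deg(\cZ)>1$, contradicting density of $\Omega$) is precisely the argument the paper leaves implicit. One minor caution: the aside in your second paragraph that a point of multiplicity $d$ on a degree-$d$ surface ``can only exist if $d=1$'' is false --- the vertex of a cone is exactly such a point (cf.\ Corollary \ref{cor-conos}) --- but your actual argument relies only on density, not on non-existence, so nothing is harmed.
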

\begin{proof}
Since $\cZ$ is irreducible, the corollary follows from Theorem \ref{th-formula} (1).
\end{proof}

\para

\begin{corollary}\label{cor-conos}
Let $\cZ$ not be a plane. ${\mathcal Z}$ is a cone of vertex $A$ if and only if $\ggg^{\cQ,A}\neq 1$.
\end{corollary}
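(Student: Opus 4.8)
The plan is to combine Theorem \ref{th-formula} (1) with the classical characterization of cones in terms of multiplicity. Theorem \ref{th-formula} (1) already establishes that $\ggg^{\cQ,A}\neq 1$ is equivalent to $\mult(A,\cZ)=\deg(\cZ)$. Hence it suffices to prove that, for an irreducible surface $\cZ$ that is not a plane, $\cZ$ is a cone with vertex $A$ if and only if $\mult(A,\cZ)=\deg(\cZ)$. Once this geometric equivalence is in hand, the corollary follows by simply chaining it with Theorem \ref{th-formula} (1).

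To establish the equivalence, I would translate the vertex to the origin. Writing $A=(a,b,c)$ and letting $f$ be the (irreducible) defining polynomial of $\cZ$, I consider $g(x,y,z)=f(x+a,y+b,z+c)$, the defining polynomial of the translate $\cZ-A$, and I decompose it into homogeneous components $g=g_m+g_{m+1}+\cdots+g_d$, where $g_j$ is homogeneous of degree $j$, $d=\deg(\cZ)$, and $m$ is the lowest degree occurring. Exactly as in the discussion preceding Theorem \ref{th-formula}, homogenizing yields $G(x,y,z,w)=\sum_{j=m}^{d} g_j(x,y,z)\,w^{d-j}$, so that $\deg_w(G)=d-m$ and therefore $\mult(A,\cZ)=\deg(G)-\deg_w(G)=m$. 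Thus the condition $\mult(A,\cZ)=\deg(\cZ)$ is equivalent to $m=d$, i.e. to $g=g_d$ being homogeneous.

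Finally I would invoke the standard fact that an irreducible affine variety whose defining polynomial becomes homogeneous after translating $A$ to the origin is precisely a cone with vertex $A$. If $g=g_d$ is homogeneous, then $g(\lambda x,\lambda y,\lambda z)=\lambda^d g(x,y,z)$, so the zero set of $g$ is invariant under the scalings $(x,y,z)\mapsto(\lambda x,\lambda y,\lambda z)$, which says exactly that every line joining the origin to a point of $\cZ-A$ lies on $\cZ-A$. Conversely, if $\cZ-A$ is invariant under these scalings, then $g(\lambda x,\lambda y,\lambda z)$ vanishes on $V(g)$ for every $\lambda$; since $g$ is irreducible and $\K$ is algebraically closed it is, up to a constant, the defining polynomial of $V(g)$, so $g\mid g(\lambda x,\lambda y,\lambda z)$, and a comparison of degrees forces $g(\lambda x,\lambda y,\lambda z)=c(\lambda)g(x,y,z)$, whence (by the linear independence of the distinct homogeneous pieces $g_j$) only one $g_j$ is nonzero and $g$ is homogeneous. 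Translating back, invariance under these scalings is precisely the statement that $\cZ$ is a cone with vertex $A$.

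The main obstacle is the careful justification of the geometric equivalence ``cone $\iff$ homogeneous defining polynomial''. Here one genuinely uses that $\cZ$ is irreducible, so that $f$, and hence $g$, is irreducible and no radical/multiplicity-of-components issue arises; one also uses that $\cZ$ is not a plane, which guarantees $d\ge 2$ and keeps us within the hypotheses under which Theorem \ref{th-formula} was stated. Once this equivalence is secured, the remainder is a purely formal translation: homogeneity of $g$ corresponds to $m=d$, which by Theorem \ref{th-formula} (1) corresponds to $\ggg^{\cQ,A}\neq 1$, giving the claimed characterization.
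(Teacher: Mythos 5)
Your proof is correct, and its skeleton coincides with the paper's: both reduce the corollary, via Theorem \ref{th-formula}\,(1), to the geometric statement that a non-planar irreducible surface is a cone with vertex $A$ exactly when $\mult(A,\cZ)=\deg(\cZ)$, i.e.\ exactly when the translated defining polynomial $g(x,y,z)=f(x+a,y+b,z+c)$ is a form. Where you differ is in how that geometric core is justified. The paper handles the two directions asymmetrically: for the forward implication it simply asserts that the defining polynomial of a cone with vertex at the origin is a form of degree $d>1$, and for the converse it builds an explicit cone inside $\cZ$ --- slicing $\{g=0\}$ with $x=1$ to get a rational curve $\mathcal{D}$ parametrized by $\mathcal{R}(s)=(1,\Phi_{2,3}(A)(t_0,s))$, forming the cone $A+t\mathcal{R}(s)$, and concluding by irreducibility of $\cZ$. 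You instead prove the clean equivalence ``cone with vertex $A$ $\iff$ $g$ homogeneous'' purely on the implicit side: homogeneity gives scaling invariance of $V(g)$, and conversely scaling invariance plus irreducibility of $g$ gives $g\mid g(\lambda x,\lambda y,\lambda z)$, hence $g(\lambda x,\lambda y,\lambda z)=c(\lambda)g$ and a single surviving homogeneous component. Your route is more elementary and self-contained: it supplies the proof of the forward direction that the paper leaves implicit, and it avoids the small verifications the paper's constructive converse requires (that $\mathcal{D}$ is a curve of degree $d$, that a suitable $t_0$ exists, that $\mathcal{R}$ really covers $\mathcal{D}$). What the paper's argument buys in exchange is an explicit cone parametrization $A+t\mathcal{R}(s)$ of $\cZ$, which is in the spirit of the paper's parametrization-driven methodology. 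Both proofs use irreducibility and the non-planarity hypothesis in the same essential way.
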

\begin{proof} Let ${\mathcal Q}(t,s)=t(g_1(s),g_2(s),g_3(s))$ be a rational parametrization of ${\mathcal Z}$ where we assume w.l.o.g. that $A$ is the origin $O$. The defining polynomial $f(x,y,z)$ of ${\mathcal Z}$ is a form  of degree $d>1$. Therefore $\mult(O,{\mathcal Z})=d=\deg({\mathcal Z})$ and by Theorem \ref{th-formula}, $\ggg^{\cQ,O}\neq 1$. Conversely, if $\ggg^{\cQ,A}\neq 1$ then $\deg_w(G)=0$ (see Theorem 4 in \cite{PS-ISSAC}). Thus, $G(x,y,z,w)=g(x,y,z)$ is an irreducible form of degree $d>1$. Let us see that $\{g(x,y,z)=0\}\cap\{x=1\}$ defines a rational plane curve $\cal D$ of degree $d$. Since $g$ is a form and $\cZ$ is not a plane, $\cal D$ is a curve of degree $d$. Moreover, since $\cZ$ is a surface, $\Phi_{2,3}(A)(t,s)$ is not constant  and parametrizes the surface defined by  $g(1,y,z)$. Then substituting either $t$ or $s$ by a suitable constant (say $t$ by $t_0$), ${\cal R}(s):=(1,\Phi_{2,3}(A)(t_0,s))$ parametrizes $\cal D$.

Now, ${\mathcal Q}(t,s)=A+t{\cal R}(s)$ defines a cone of vertex $A$ contained in ${\mathcal Z}$. Therefore, since ${\mathcal Z}$ is irreducible, it holds that  ${\mathcal Z}$ is the previous cone.
\end{proof}

\begin{corollary}\label{cor-conos-2}
Let $\cZ$ not be a plane. There exists at most one $A\in \K^3$ such that $\ggg^{\cQ,A}\neq 1$.
\end{corollary}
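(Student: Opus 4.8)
The plan is to leverage Corollary~\ref{cor-conos}, which characterizes the values $A$ with $\ggg^{\cQ,A}\neq 1$ as precisely the cone vertices of $\cZ$. Thus the statement reduces to a classical fact: a non-planar irreducible surface has at most one cone vertex. I would argue by contradiction, assuming two distinct points $A_1\neq A_2$ both satisfy $\ggg^{\cQ,A_i}\neq 1$, so by Corollary~\ref{cor-conos} the surface $\cZ$ is simultaneously a cone with vertex $A_1$ and a cone with vertex $A_2$.

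The key step is to extract a contradiction from having two vertices. Recall that by Theorem~\ref{th-formula}(1), $\ggg^{\cQ,A_i}\neq 1$ forces $\mult(A_i,\cZ)=\deg(\cZ)=:d$, and in the proof of Corollary~\ref{cor-conos} we saw that $G(x,y,z,w)=g(x,y,z)$ is then an irreducible form of degree $d>1$ (in coordinates centered at $A_i$). Geometrically, a cone with vertex $A$ is a union of lines through $A$; if $\cZ$ were a cone with two distinct vertices $A_1,A_2$, then for a generic point $P\in\cZ$ both lines $\overline{A_1 P}$ and $\overline{A_2 P}$ would lie entirely on $\cZ$. I would then consider the plane $\pi$ spanned by $A_1,A_2,P$: the two distinct lines $\overline{A_1 P}$ and $\overline{A_2 P}$ lie in $\pi\cap\cZ$, and running this over a one-parameter family of points $P$ forces $\pi\cap\cZ$ to contain a two-dimensional set of lines, which shows $\pi\subset\cZ$; since $\cZ$ is irreducible and not a plane, this is the contradiction. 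A cleaner algebraic route is to translate coordinates so that $A_1=O$, making $f(x,y,z)=g(x,y,z)$ a form of degree $d$; the vertex condition at $A_2=(a,b,c)$ then says that the whole line $O+t(A_2)$ and, more strongly, that $f$ is invariant under the additional grading centered at $A_2$, i.e.\ $f$ is simultaneously homogeneous with respect to two distinct origins. One shows this forces $f$ to depend on fewer variables (the defining polynomial becomes a form in the two coordinates transverse to the line $\overline{A_1 A_2}$), so $\cZ$ is a cylinder over a cone, and being a cone over a second vertex as well degenerates $f$ to a linear form, contradicting $d>1$.

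The main obstacle I anticipate is making the ``two-vertex'' argument fully rigorous at the level of the defining form rather than merely at the geometric level of lines. The clean statement is: if $f$ is a nonzero homogeneous form (vertex at $O$) that is also homogeneous with respect to the origin shifted to $A_2\neq O$, then after the linear change sending the direction $\overrightarrow{OA_2}$ to a coordinate axis, say $x$, the partial $\partial f/\partial x$ must vanish identically (Euler's relation applied at both vertices yields $x\,\partial_x f + y\,\partial_y f + z\,\partial_z f = d f$ and the analogous identity centered at $A_2$, and subtracting gives $\partial_x f \equiv 0$ up to scaling). Hence $f$ does not involve $x$, so $\cZ$ is a cylinder; combined with being a cone of vertex $O$, its defining curve in the $\{x=0\}$ plane must itself be a cone in the plane, i.e.\ a union of lines through one point, forcing $f$ to factor into linear forms and contradicting irreducibility of degree $d>1$. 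I would carry out precisely this subtraction-of-Euler-relations computation, as it is the crisp algebraic kernel of the argument and avoids any appeal to genericity of $P$.
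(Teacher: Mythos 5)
Your reduction via Corollary~\ref{cor-conos} to ``an irreducible non-planar surface has at most one cone vertex'' matches the paper's starting point, and your argument is correct, but the way you derive the contradiction is genuinely different. The paper uses $\mult(A,\cZ)=\deg(\cZ)$ to write $f(x+a,y+b,z+c)=f(x,y,z)+g^{\star}$ with $\deg(g^{\star})<d$, concludes $g^{\star}=0$, i.e.\ $f$ is invariant under translation by $A$, and then argues geometrically: choosing a ruling direction $v$ of the cone not parallel to $A$, the plane $\Pi$ spanned by $A$ and $v$ contains the infinite family of distinct lines $nA+tv\subset\cZ$, so irreducibility forces $\cZ=\Pi$, a contradiction. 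Your route instead exploits that $f$ and its translate to $A_2$ are both forms of degree $d$, subtracts the two Euler identities to get $a\,\partial_x f+b\,\partial_y f+c\,\partial_z f\equiv 0$, normalizes $\overrightarrow{OA_2}$ to a coordinate axis so that $f$ becomes a binary form, and then uses that a binary form of degree $d>1$ over an algebraically closed field splits into linear factors, contradicting irreducibility of $f$. (Note that translation invariance, which the paper establishes, implies your identity $\sum a_i\partial_i f=0$ by differentiating along $A$, so the two proofs are cousins.) What your version buys is a purely algebraic argument on the defining form that never touches the cone parametrization $t(g_1(s),g_2(s),g_3(s))$ and avoids the ``infinitely many lines fill the plane'' step; what the paper's version buys is the stronger intermediate fact that $\cZ$ is translation-invariant, which makes the geometric picture explicit. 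Your first, purely geometric sketch (the two pencils of lines through $A_1$ and $A_2$ in a common plane) is somewhat loosely phrased, but since you explicitly commit to carrying out the Euler-relation computation as the actual proof, the argument stands.
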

\begin{proof}
 Let us suppose that exist two points verifying the statement. We assume w.l.o.g., that one of them is the origin $O$ and the other one is ${A=(a,b,c)}\neq{O}$. By Corollary \ref{cor-conos}, $\cZ$ is a cone parametrized as ${\mathcal Q}(t,s)=t(g_1(s),g_2(s),g_3(s))$. Note that $f(x,y,z)$ is homogeneous of degree $d>1$.  Moreover $g(x,y,z)=f(x+a,y+b,z+c)$ can be expressed as $g(x,y,z)=f(x,y,z)+g^{\star}(x,y,z)$ where $\deg(g^{\star})<d$. By Theorem \ref{th-formula},  $\mult(A,{\mathcal Z})=\deg({\mathcal Z})$. Thus $f(x+a,y+b,z+c)=f(x,y,z)$. This implies that ${\mathcal Z}$ is invariant under the translation of the vector $A$. Let us see that ${\mathcal Z}$ is a plane which is a contradiction. Indeed, let $s_0\in{\mathbb K}$ be such that $v:=(g_1(s_0),g_2(s_0),g_3(s_0))$ is not parallel to $A$; observe that $s_0$ exists because $(g_1(s),g_2(s),g_3(s))$ is not a line. We consider the plane $\Pi$ given by the parametrization $p(s,t):=sA+vt$. Because of the invariance of $\cZ$, under translation of vector $A$, the family of lines $p(n,t)$ are included in $\Pi \cap  \cZ$, for $n\in {\mathbb N}\cup \{0\}$. Since all these lines are different and $\cZ$ is an irreducible algebraic set one concludes that $\cZ=\Pi$.
\end{proof}

\para

\para

\begin{corollary}\label{cor-1-formula} {\sf (The multiplicity formula)}
Let $A_0\in \K^3\setminus \cZ$ and let $A\in \K^3$.  Then
\begin{enumerate}
\item if $\ggg^{\cQ,A}=1$, then
 $$\mult(A,\cZ)=\dfrac{\degree(\Phi_{23}(A_0))-\degree(\Phi_{23}(A))}{\degree(\cQ)}, $$
\item if  $\ggg^{\cQ,A}\neq 1$, then
$$\mult(A,\cZ)=\dfrac{\degree(\Phi_{23}(A_0))}{\degree(\cQ)} $$
\end{enumerate}
\end{corollary}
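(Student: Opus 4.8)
The plan is to derive Corollary \ref{cor-1-formula} as a direct consequence of Theorem \ref{th-formula}, using the fixed reference point $A_0 \notin \cZ$ to calibrate the value of $\deg(\Phi_{2,3}(A_0))$. The key observation, which I would invoke from Remark \ref{rem-A0}(2), is that for any point $A_0$ outside the surface $\cZ$ the quantity $\deg(\Phi_{2,3}(A_0))$ is a fixed invariant, namely $\deg(\cZ)\,\deg(\cQ)$. This is what makes the formula usable in practice: we never need to know $\deg(\cZ)$ explicitly, because we can compute it from any single off-surface sample point $A_0$ via $\deg(\cZ) = \deg(\Phi_{2,3}(A_0))/\deg(\cQ)$.

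For part (1), I would start from the case $\ggg^{\cQ,A}=1$. Since $A_0 \notin \cZ$ we certainly have $\mult(A_0,\cZ)=0 < \deg(\cZ)$, so by Theorem \ref{th-formula}(2) (applied at $A_0$, where $\ggg^{\cQ,A_0}=1$ necessarily, as $A_0$ is not even on the surface) we obtain
\[
\deg(\cZ) - 0 = \frac{\deg(\Phi_{2,3}(A_0))}{\deg(\cQ)},
\]
so that $\deg(\cZ)\deg(\cQ)=\deg(\Phi_{2,3}(A_0))$. Now applying Theorem \ref{th-formula}(2) again, this time at the given point $A$ with $\ggg^{\cQ,A}=1$, yields
\[
\deg(\cZ)-\mult(A,\cZ)=\frac{\deg(\Phi_{2,3}(A))}{\deg(\cQ)}.
\]
Subtracting these two identities, the $\deg(\cZ)$ terms cancel and I get $\mult(A,\cZ)=\bigl(\deg(\Phi_{2,3}(A_0))-\deg(\Phi_{2,3}(A))\bigr)/\deg(\cQ)$, which is exactly the claimed formula.

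For part (2), where $\ggg^{\cQ,A}\neq 1$, I would apply Theorem \ref{th-formula}(1) to conclude directly that $\mult(A,\cZ)=\deg(\cZ)$. Combining this with the calibration identity $\deg(\cZ)=\deg(\Phi_{2,3}(A_0))/\deg(\cQ)$ established above gives $\mult(A,\cZ)=\deg(\Phi_{2,3}(A_0))/\deg(\cQ)$, as required. The main subtlety to check — and the only place where a genuine argument rather than bookkeeping is needed — is the justification that $\ggg^{\cQ,A_0}=1$ for the chosen $A_0\notin \cZ$; this follows because Theorem \ref{th-formula}(1) states $\mult(A_0,\cZ)=\deg(\cZ)$ iff $\ggg^{\cQ,A_0}\neq 1$, and since $A_0 \notin \cZ$ we have $\mult(A_0,\cZ)=0$, which is strictly less than $\deg(\cZ)$ (as $\cZ$ is a surface, not a plane reducible to degree issues, so $\deg(\cZ)\geq 1 > 0$), forcing $\ggg^{\cQ,A_0}=1$. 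With that in hand the whole corollary is essentially an algebraic manipulation of the two cases of Theorem \ref{th-formula}, and I do not anticipate any real obstacle beyond stating this calibration cleanly.
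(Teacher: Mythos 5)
Your proof is correct and follows essentially the same route as the paper: the paper's own argument also consists of applying Theorem \ref{th-formula} at $A$ to get $(\deg(\cZ)-\mult(A,\cZ))\,\degree(\cQ)=\degree(\Phi_{23}(A))$ and at $A_0$ to get the calibration $\deg(\cZ)\,\degree(\cQ)=\degree(\Phi_{23}(A_0))$, then subtracting. Your explicit justification that $\ggg^{\cQ,A_0}=1$ because $\mult(A_0,\cZ)=0<\deg(\cZ)$ is a detail the paper leaves implicit, but it is the right one.
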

\begin{proof} We prove (1); similarly for (2).
By Theorem \ref{th-formula}, one has that
$(\deg(\cZ)-\mult(A,\cZ))\degree(\cQ)=\degree(\Phi_{23}(A))$, and $\deg(\cZ)\degree(\cQ)=\degree(\Phi_{23}(A_0))$. From here the proof is obvious.
\end{proof}

\para

\begin{remark} Note that
\begin{enumerate}
\item  if there exists $A_0$ such that $\ggg^{\cQ,A_0}\neq 1$ (i.e., $\cZ$ is a cone), Theorem \ref{th-formula} and Corollary \ref{cor-1-formula} provide the degree of the surface.
    \item From Corollary \ref{cor-1-formula}, one deduces that $\degree(\Phi_{23}(A))$ is invariant for
    all the  $A\in \K^3$ having the same multiplicity w.r.t. $\cZ$.
    \end{enumerate}
\end{remark}

\para

The next corollary is a direct consequence of Corollary \ref{cor-1-formula}.

\para

\begin{corollary}\label{cor-2-formula} {\sf (Criterion for simple points)}
Let $A_0\in \K^3\setminus \cZ$ and let $A\in \K^3$. If $\cal Z$ is not a plane,  the following statements are equivalent
\begin{enumerate}
\item
$A$ is a simple point of $\cZ$.
\item  $\ggg^{\cQ,A}=1$ and $\degree(\Phi_{2,3}(A_0))-\degree(\Phi_{23}(A))=\degree(\cQ)$.
\end{enumerate}
\end{corollary}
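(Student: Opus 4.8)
The plan is to derive both implications directly from Corollary \ref{cor-1-formula}, using Theorem \ref{th-formula} only to supply the one extra fact needed to control the vanishing of $\ggg^{\cQ,A}$. Throughout I would exploit that, since $\cZ$ is irreducible and is not a plane, its degree satisfies $\deg(\cZ)\geq 2$. This is the only place where the non-plane hypothesis enters, and it is exactly what separates the multiplicity value $1$ from the top value $\deg(\cZ)$ in Theorem \ref{th-formula}.

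For the implication $(2)\Rightarrow(1)$ I would simply substitute into the formula. Assuming $\ggg^{\cQ,A}=1$ and $\degree(\Phi_{2,3}(A_0))-\degree(\Phi_{2,3}(A))=\degree(\cQ)$, Corollary \ref{cor-1-formula}(1) applies (its hypothesis $\ggg^{\cQ,A}=1$ holds) and gives $\mult(A,\cZ)=(\degree(\Phi_{2,3}(A_0))-\degree(\Phi_{2,3}(A)))/\degree(\cQ)=\degree(\cQ)/\degree(\cQ)=1$, so $A$ is a simple point of $\cZ$.

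For the converse $(1)\Rightarrow(2)$ I would first establish $\ggg^{\cQ,A}=1$, and then read off the degree identity. Since $A$ is simple, $\mult(A,\cZ)=1$, and because $\deg(\cZ)\geq 2$ we have $\mult(A,\cZ)\neq\deg(\cZ)$; by the equivalence in Theorem \ref{th-formula}(1) this forces $\ggg^{\cQ,A}=1$. With that in hand, Corollary \ref{cor-1-formula}(1) is available and yields $1=\mult(A,\cZ)=(\degree(\Phi_{2,3}(A_0))-\degree(\Phi_{2,3}(A)))/\degree(\cQ)$, which rearranges to $\degree(\Phi_{2,3}(A_0))-\degree(\Phi_{2,3}(A))=\degree(\cQ)$, establishing $(2)$.

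I do not expect a genuine obstacle here, since the statement is essentially a specialization of the multiplicity formula to the value $\mult(A,\cZ)=1$. The only subtlety worth flagging is the implicit use of $\deg(\cZ)\geq 2$ in the direction $(1)\Rightarrow(2)$: without the assumption that $\cZ$ is not a plane, a simple point could a priori coincide with the case $\mult(A,\cZ)=\deg(\cZ)$, in which Theorem \ref{th-formula}(1) would instead give $\ggg^{\cQ,A}\neq 1$ and the stated criterion would fail. Hence the hypothesis that $\cZ$ is not a plane is not decorative but is precisely what makes the equivalence hold.
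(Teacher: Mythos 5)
Your proposal is correct and follows essentially the same route as the paper: the paper's proof likewise reads the two degree identities $(\deg(\cZ)-1)\deg(\cQ)=\deg(\Phi_{2,3}(A))$ and $\deg(\cZ)\deg(\cQ)=\deg(\Phi_{2,3}(A_0))$ off Theorem \ref{th-formula} and subtracts, which is exactly what Corollary \ref{cor-1-formula}(1) packages for you. Your explicit observation that the non-plane hypothesis ($\deg(\cZ)\geq 2$) is what forces $\ggg^{\cQ,A}=1$ for a simple point is a detail the paper leaves implicit in its ``from here the proof is obvious,'' but it is the same argument.
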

\begin{proof}
By Theorem \ref{th-formula},
$(\deg(\cZ)-1)\degree(\cQ)=\degree(\Phi_{2,3}(A))$, and $\deg(\cZ)\degree(\cQ)=\degree(\Phi_{2,3}(A_0))$. From here the proof is obvious.
\end{proof}

We observe that if we know how to compute $\degree(\cQ)$, $\degree(\Phi_{23}(A))$ for any given $A\in \K^3$, and if we know how to compute a point out of the surface (recall that we do not have the implicit equation of $\cZ$), Corollary \ref{cor-1-formula} provides a method for computing the multiplicity of any point in $\K^3$, and Corollary \ref{cor-2-formula} a method   to check whether it is simple on  the surface.  We note that the $\deg(\cQ)$ is the {\sf index of improperness} of $\cQ(t,s)$; if $\cQ(t,s)$ is proper then this index is 1. Therefore, once the parametrization is given,  $\deg(\cQ)$ is fixed. However, $\degree(\Phi_{23}(A))$ will vary depending on $A$. Both quantities can be derived by applying elimination theory techniques as Gr\"obner basis. Indeed, they can be computed by means or resultants as shown in \cite{PS-grado} without determining the implicit equation of $\cQ$.

\para

In the following we  recall (as a recipe) how to compute $\degree(\cQ)$ and  $\degree(\Phi_{23}(A))$; for further details we refer to  \cite{PS-grado}, \cite{PS-ISSAC} or \cite{JSC08}. In addition,  we deduce a method for determining a point out of the surface.

\para

 \noindent \underline{\sf Method 1: Computation of $\degree(\cQ)$}

 \para

\noindent {\sf [Step 0]} Check the global hypothesis above:
 \vspace{2 mm}

 \parbox{15 cm}{{\sf [Step 0.1]} If any of the projective curves defined by the each of the non-constant polynomials in $\{N_1,N_2,N_3,D_1,D_2,D_3\}$ passes through $(0:1:0)$, apply a suitable (polynomial) linear change of parameters.}

  \parbox{15 cm}{{\sf [Step 0.2]} If the determinant of the Jacobian of $(\frac{N_2}{D_2},\frac{N_3}{D_3})$ is identically zero, apply a suitable linear change of coordinates in $\K^3$; namely, exchange suitably the affine coordinates in $\K^3$.}
\vspace{2 mm}

\noindent  {\sf [Step 1]} For $i=1,2,3$,  compute $G_i(t,s,h_1,h_2)=\numer\left(\frac{N_i(t,s)}{D_i(t,s)}-\frac{N_i(h_1,h_2)}{D_i(h_1,h_2)}\right)$.

\noindent  {\sf [Step 2]}  Determine $R(t,h_1,h_2,X)=\res_s(G_1,G_2+XG_3)$ where $X$ is a new variable.

\noindent {\sf [Step 3]}  Compute $S(t,h_1,h_2)=\pp_{\{h_1,h_2\}}(\content_{X}(R)))$.

\noindent {\sf [Step 4]}  $\degree(\cQ)=\deg_t(S)$.

\para

 \begin{center} {\sf Computation of  $\degree(\Phi_{23}(A))$} \end{center}

\para

We observe that by Theorem \ref{th-formula}, if $A_0\not\in \cZ$ then $\ggg^{\cQ,A_0}=1$. Therefore, by Corollary \ref{cor-1-formula}, we only need to compute $\degree(\Phi_{23}(A))$ for those $A$ such that  $\ggg^{\cQ,A}=1$; in particular when $A=A_0$.
Thus, in the following we assume that $A=(a,b,c)$ is such that $\ggg^{\cQ,A}=1$. Moreover, we will use the following technical lemma that will simplify the computations.

\para

In addition, since $\ggg^{\cQ,A}=1$, by Remark \ref{remark-jac}, the determinant of the Jacobian of $\Phi_{2,3}(A)(t,s)$ does not vanish. Therefore,
$\Phi_{2,3}(A)(\K^2)$ is dense in $\K^2$. So, we can compute the degree by taking a generic element $(\cX_1,\cX_2)\in \K^2$ as it is done in \cite{JSC08}. More precisely, we have the following method.

\para

 \noindent \underline{\sf Method 2: Computation of  $\degree(\Phi_{23}(A))$}

\para

\noindent {\sf [Step 0]} Check the global hypothesis above: if any of the projective curves defined by the each of the non-constant polynomials in $\{N_1-aD_1,N_2-bD_2,N_3-cD_3,N_1,N_2,N_3,D_1,D_2,D_3\}$ passes through $(0:1:0)$, apply a suitable (polynomial) linear change of parameters.

\para

\noindent {\sf [Step 1]}
 We take the  components $\chi_{i}^{A}(t,s)$ of $\Phi_{2,3}(A)(t, s):=(\chi_{1}^{A}(t,s),\chi_{2}^{A}(t,s))$

\noindent {\sf [Step 2]} For $i=1,2$, let
$H_i(t,s,\cX_i)=\numer(\chi_{i}^{A}(t,s)-\cX_i)\in \K[t,s,\cX_i].$

\noindent {\sf [Step 3]} $R(t,\cX_1,\cX_2)=\res_s(H_1,H_2)\in \K[t,\cX_1,\cX_2]$

\noindent {\sf [Step 4]} $\degree(\Phi_{2,3}(A))=\deg_t(\pp_{\{\cX_1,\cX_2\}}(R))$.

 \para

 \begin{center}
 {\sf Computation of a point $A_0$ out of $\cZ$}
 \end{center}

\para

For our reasoning we need to know
 the partial degree,  w.r.t. one of the variables,  of the defining polynomial of $\cZ$. Say that $m$ is the partial degree w.r.t. the variable $x$ (below we show how to compute $m$). This means
that for almost all affine lines $\cal L$ of the type $\{y=\lambda,z=\mu\}$  (recall that $\cZ$ is not a plane parallel to the coordinate planes) it holds that $\card({\cal L}\cap \cZ)=m$.  Then, the idea is as follows. We take values for $(\lambda, \mu)$ till the number of different points on $\cZ$ generated by $\cQ(t,s)$ is $m$. Note that for a fixed $(\lambda, \mu)$, these points are
\[ {\cal W}(\lambda, \mu)=\left\{\left(\dfrac{N_1(t_0,s_0)}{D_1(t_0,s_0)},\lambda, \mu\right)\,\left|\, \begin{array}{l} D_2(t_0,s_0)\lambda-N_2(t_0,s_0)=0, \\ D_3(t_0,s_0)\mu-N_3(t_0,s_0)=0, \\
\lcm(D_1,D_2,D_3)(t_0,s_0)\neq 0
\end{array}
\right\} \right. \]
Once we have found a suitable $(\lambda, \mu)$, every point $(\alpha, \lambda, \mu)\not\in {\cal W}(\lambda, \mu)$ is not on $\cZ$. We finish this section showing how to compute $m$ (see details in   Theorem 6 in \cite{JSC08}).

\para

 \noindent \underline{\sf Method 3: Computation of the partial degree w.r.t. $x$}

\para

\noindent  {\sf [Step 1]} Apply Method 1 to compute $\deg(\cQ)$

\noindent {\sf [Step 2]} Let, for $i=2,3$,   $G_i(t,s,h_1,h_2)$ as in Step 1 of Method 1.

\noindent {\sf [Step 3]} Return  $\dfrac{1}{\deg(\cQ)} \deg_{t}(\pp_{\{h_1,h_2\}}(\res_s(G_2(t,s,h_1,h_2),G_3(t,s,h_1,h_2)))$.

\para

\begin{remark}
Note that the polynomials $G_i$ are obtained in Step 1 of Method 1, and therefore it might happen that Step 0 of Method 1 was required. In that case,
we would have performed a linear change in the parameters $\{t, s\}$, and/or an affine linear change of coordinates $\{x, y, z\}$ consisting  in a permutation of variables.  The first situation does not affect to the partial degree of the polynomial. However, the second can. Nevertheless, if this is the case,  we only need to work with the new variable (the one exchange with $x$) and the corresponding lines perpendicular to  its corresponding coordinate plane.
\end{remark}

\para

\noindent \underline{\sf Method 4: Computation of $A_0\in \K^3\setminus \cZ$}

\para

\noindent  {\sf [Step 1]} Apply Method 3 to compute the partial degree  $m$ of the defining polynomial of $\cZ$ w.r.t. $x$.

\noindent {\sf [Step 2]} Give values to $(\lambda, \mu)\in \K^2$ till  $\card({\cal W}(\lambda, \mu))=m$, then take $A_0:=(\alpha, \lambda, \mu)\in \K^3\setminus {\cal W}(\lambda, \mu)$.

\para

 \begin{center} {\sf Computation of  $\mult(A,\cZ)$} \end{center}

\para

We finish the section, putting together all the previous ideas for computing the multiplicity of $A\in \K^3$ w.r.t. to the rational affine surface $\cZ$, parametrized by $\cQ(t,s)$.

\para

\noindent \underline{\sf Method 5: Computation of $\mult(A,\cZ)$}

\para

\noindent  {\sf [Step 1]} Apply Method 4 to find a point $A_0\not\in \cZ$.

\noindent  {\sf [Step 2]} Compute $\ggg^{\cQ,A}$.

\noindent  {\sf [Step 3]} If $\ggg^{\cQ,A}\neq 1$ then

 {\sf [Step  3.1]} Apply Method 2 to compute $n_1:=\deg(\Phi_{2,3}(A_0))$.

 {\sf [Step  3.2]} Apply Method 1 to compute $n_2:=\deg(\cQ)$.

 {\sf  [Step  3.3]} Return $\frac{n_1}{n_2}$

\noindent  {\sf [Step 4]} If $\ggg^{\cQ,A}= 1$ then

 {\sf [Step  4.1]} Apply Method 2 to compute $m_1:=\deg(\Phi_{2,3}(A))$ and $n_1:=\deg(\Phi_{2,3}(A_0))$.

 {\sf [Step  4.2]} Apply Method 1 to compute $n_2:=\deg(\cQ)$.

 {\sf [Step  4.3]} Return $\frac{n_1-m_1}{n_2}$

\section{Computing the affine $\cP$-singularities}\label{sec-sing}

In this section we see $\K^2$ embedded in $\projdos$ by means of the natural map $${\mathbf j}:\K^2 \rightarrow \projdos, (t_0,s_0)\mapsto (t_0:s_0:1);$$ in this sense, as already commented in Section \ref{sec-intro}, we will be determining the affine $\cP$-singularities of $\cS$.

\para

 For this purpose,  let $\Delta_i:=\{(t_0,s_0)\,|\,\lcm(q_{i,1},q_{i,2},q_{i,3})(t_0,s_0)=0\}$ and $\cB_a$ be the set of base points of $\cPw(t,s)$.
Note that ${\mathbf j}(\cB_a)\subset \cB$. The basic idea consists in applying Method 5 to a generic point on $\cS$. For this purpose, we proceed as follows.

\para

 \noindent \underline{\sf First Level.} We decompose $\Lambda_1:=\K^2\setminus \Delta_4$ as
\[ \Lambda_1:=\cup_{k=1}^{\ell_1} \cF_{k}^{1} \]
 such that if $(t_0,s_0)\in \cF_{k}^{1}$ then $\cPw(t_0,s_0)$ is a point of $\cSw$ of multiplicity $k$.

\para

 \noindent \underline{\sf Second Level.} If $\Delta_4\setminus \cB_a\neq \emptyset$ we decompose $\Lambda_2:=\Delta_4\setminus \Delta_3$ as
 \[ \Lambda_2:=\cup_{k=1}^{\ell_2} \cF_{k}^{2} \]
 such that if $(t_0,s_0)\in \cF_{k}^{2}$ then $\cPz(t_0,s_0)$ is a point of $\cSz$ of multiplicity $k$.

\para

 \noindent \underline{\sf Third level.} If $\Lambda_2 \setminus \cB_a\neq \emptyset$ we decompose $\Lambda_3:=(\Delta_4 \cap \Delta_3)\setminus \Delta_2$ as
 \[ \Lambda_3=\cup_{k=1}^{\ell_3} \cF_{k}^{3} \]
 such that if $(t_0,s_0)\in \cF_{k}^{3}$ then $\cPy(t_0,s_0)$ is a point of $\cSy$ of multiplicity $k$.

\para

 \noindent \underline{\sf Fourth Level.} If  $\Lambda_3\setminus \cB_a\neq \emptyset$ we decompose $\Lambda_4:=(\Delta_4 \cap \Delta_3\cap \Delta_2)\setminus \Delta_1$ as
 \[ \Lambda_4=\cup_{k=1}^{\ell_4} \cF_{k}^{4} \]
 such that if $(t_0,s_0)\in \cF_{k}^{4}$ then $\cPx(t_0,s_0)$ is a point of $\cSx$ of multiplicity $k$.

 \para

 Note that at this point, $\Lambda_4\setminus \cB_a=\emptyset$. Moreover ${\mathbf j}(\cup_{i=1}^{4}\cF_{k}^{i})\subset \cSm_k$ (see Section \ref{sec-intro}).

\para

\begin{center}
{\sf First level}
\end{center}

\para

The strategy for this level is as follows. We determine a closed set $\Delta^*$ of $\K^2$ such that for every $(t_0,s_0)\in \Lambda_1\setminus \Delta^*$ then $\cPw(t_0,s_0)$ is simple on $\cSw$; note that $\Lambda_1\setminus \Delta^* \subset \cF_{1}^{1}$. Next we decompose $\Lambda_1\cap \Delta^*$ as
$$\Lambda_1\cap \Delta^*=\cup_{k=1}^{\ell_{k}} \Delta (k) $$
such that  if $(t_0,s_0)\in \Delta (k)$ then  $\mult(\cP(t_0,s_0,1),\cS)=\ell_{k}$. Note that $\Delta (k)\subset \cF_{\ell_{k}}^{1}$.

\para

 \begin{center}
 {\sf First level (Part I): computation of $\Delta^*$}
 \end{center}

\para

In order to compute $\Delta^*$ we will determine some closed sets $\{\Delta^{*}_{i}\}_{i=0,..,4}$  of $\K^2$ such that
$\Delta^*=\cup_{i=0}^{4}\Delta^{*}_{i}.$
 For that purpose,  we apply Method 5 in Section \ref{sec-formula} to a generic point of $\cPw(\Lambda_1)\subset \cSw$; namely $\cPw(\tt,\ss)$ where $\tt,\ss$ are treated as new variables. We assume that we have already computed a point $A_0 $  in $\K^3\setminus \cSw$ (see  Step 1 in Method 5 or see Method 4 in Section \ref{sec-formula}) as well as $\deg(\Phi_{2,3}(A_0))$ and $\deg(\cPw)$. For simplifying the notation, throughout this section we will denote the generic point $\cPw(\tt,\ss)$ by $\cA$.

\para

To perform Step 2 in Method 5, we consider $\Phi_{2,3}(\cA)(t,s)$ as well as its  rational function components; namely
\[ \chi_{1}^{\cA}(t,s,\tt,\ss)=\dfrac{q_{4,2}(\tt,\ss)p_2(t,s)-p_2(\tt,\ss)q_{4,2}(t,s)}{q_{4,1}(\tt,\ss)p_1(t,s)-p_{1}(\tt,\ss)q_{4,1}(t,s)}
\cdot\dfrac{q_{4,1}(t,s)}{q_{4,2}(t,s)}\cdot\dfrac{q_{4,1}(\tt,\ss)}{q_{4,2}(\tt,\ss)} , \]  \[\chi_{2}^{\cA}(t,s,\tt,\ss)=\dfrac{q_{4,3}(\tt,\ss)p_3(t,s)-p_3(\tt,\ss)q_{4,3}(t,s)}{q_{4,1}(\tt,\ss)p_1(t,s)-p_{1}(\tt,\ss)q_{4,1}(t,s)} \cdot
\dfrac{q_{4,1}(t,s)}{q_{4,3}(t,s)}     \cdot\dfrac{q_{4,1}(\tt,\ss)}{q_{4,3}(\tt,\ss)}
\]
 Note that, since
 $t,s,\tt,\ss$ are independent variables, and since we have excluded planes parallel to the coordinate planes (see general assumptions in Section \ref{sec-notation}), the above rational functions are well-defined. Moreover, for every particular value $(t_0,s_0)\in \Lambda_1$  of $(\tt,\ss)$ the specialization of the rational functions are also well-defined.
Similarly, we take the polynomials
$$\ggg^{\cPw,\cA}_{i}=\numer(\chi_{i}^{\cA}(t,s,\tt,\ss)-\chi_{i}^{\cA}(h_1,h_2,\tt,\ss))$$
as well as  $$K(t,s,h_1,h_2,\tt,\ss)=\gcd(\ggg^{\cPw,\cA}_{1},\ggg^{\cPw,\cA}_{2}),$$ and $\ggg^{\cPw,\cA}$. That is, we perform Step 3 for the generic element $\cA$.

 \para

 If $\ggg^{\cPw,\cA}\neq 1$, we do not need to continue since, by Corollary \ref{cor-1-planos}, $\cS$ is a plane and the problem is trivial. Alternatively, one might avoid this case by trivially checking first whether $\cS$ is a plane.
 Therefore, we assume that $\ggg^{\cPw,\cA}=1$. Thus, we are already in Step 4 of Method 5. Nevertheless, at this stage, we know that generically $\ggg^{\cPw,\cA}=1$ but for certain $(\tt,\ss)$-values the gcd may increase the degree. Note that, because of Corollary \ref{cor-conos} and \ref{cor-conos-2}, this can only happen if $\cS$ is a cone and only for the $(\tt,\ss)$ values that generate its vertex.
  These values, if they exist, will be included in  the closed set $\Delta_{0}^{*}$. For determining $\Delta_{0}^{*}$ we consider the following direct generalized version of Lemma 3 in \cite{Sen2}:

\para

\begin{lemma}\label{lemma3-tracing} {\sf (Lemma 3 in \cite{Sen2})}   Let  $f,g\,\in\,\K[\tt,\ss][\overline{\cY}][t]\setminus \{0\}$, where $\overline{\cY}$ is a finite set of variables. Let  $f=\bar{f}\cdot \gcd(f,g)$, $g=\bar{g}
\cdot \gcd(f,g)$. Let $(t_0,s_0)\in \K^2$ be such that not both
leading coefficients of $f$ and $g$ w.r.t. $t$ vanish at $(t_0,s_0)$. If $\Resultant_{t}(\bar{f},\bar{g})$
does not vanish at $(t_0,s_0)$, then $
\gcd(f,g)(t,\overline{\cY},t_0,s_0)=\gcd((f(t,\overline{\cY},t_0,s_0),g(t,\overline{\cY},t_0,s_0)).$
\end{lemma}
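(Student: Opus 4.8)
The plan is to transfer everything through the specialization homomorphism and to split the claim into an easy divisibility and the genuine point, namely that the hypothesis on $\Resultant_t(\bar f,\bar g)$ prevents $\bar f$ and $\bar g$ from developing a common factor in $t$ after specializing. First I would set $D:=\gcd(f,g)$, so that $f=\bar f\,D$ and $g=\bar g\,D$ with $\gcd(\bar f,\bar g)=1$, and let $\sigma$ be the substitution $\tt\mapsto t_0,\ \ss\mapsto s_0$ (fixing $t$ and the variables $\overline{\cY}$), so that $\sigma(P)=P(t,\overline{\cY},t_0,s_0)$. All the $\gcd$'s will be read in the principal ideal domain $\K(\tt,\ss,\overline{\cY})[t]$ before specializing and in $\K(\overline{\cY})[t]$ afterwards, hence every equality of $\gcd$'s is meant up to a nonzero constant of the corresponding field.

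Applying $\sigma$ to $f=\bar f\,D$ and $g=\bar g\,D$ gives $\sigma(f)=\sigma(\bar f)\,\sigma(D)$ and $\sigma(g)=\sigma(\bar g)\,\sigma(D)$, so $\sigma(D)$ divides both $\sigma(f)$ and $\sigma(g)$ and therefore $\sigma(D)\mid\gcd(\sigma(f),\sigma(g))$. The whole content of the lemma is the reverse divisibility, and for that I first need $\sigma(D)\neq 0$. This is exactly what the leading-coefficient hypothesis provides: assuming w.l.o.g. $\lc_t(f)(t_0,s_0)\neq 0$ and using that the coefficient ring is a domain, from $\lc_t(f)=\lc_t(\bar f)\,\lc_t(D)$ I get $\lc_t(D)(t_0,s_0)\neq 0$, hence $\sigma(D)\neq 0$. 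Consequently, in the PID $\K(\overline{\cY})[t]$ one has $\gcd(\sigma(f),\sigma(g))=\sigma(D)\cdot\gcd(\sigma(\bar f),\sigma(\bar g))$, and it only remains to show $\gcd(\sigma(\bar f),\sigma(\bar g))=1$.

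For this crucial step I would avoid any bookkeeping of degree drops by invoking the resultant identity: since $\Resultant_t(\bar f,\bar g)$ belongs to the ideal generated by $\bar f$ and $\bar g$, there exist $A,B\in\K[\tt,\ss][\overline{\cY}][t]$ with $A\,\bar f+B\,\bar g=\Resultant_t(\bar f,\bar g)$. Applying $\sigma$ yields $\sigma(A)\,\sigma(\bar f)+\sigma(B)\,\sigma(\bar g)=\Resultant_t(\bar f,\bar g)(t_0,s_0)$, which is nonzero by hypothesis. Hence any common divisor of $\sigma(\bar f)$ and $\sigma(\bar g)$ in $\K(\overline{\cY})[t]$ must divide a nonzero element of $\K(\overline{\cY})$, so it has $t$-degree $0$ and is a unit; thus $\gcd(\sigma(\bar f),\sigma(\bar g))=1$. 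Combining with the previous paragraph gives $\gcd(\sigma(f),\sigma(g))=\sigma(D)=\gcd(f,g)(t,\overline{\cY},t_0,s_0)$, as claimed.

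The main obstacle I anticipate is conceptual rather than computational: specialization may lower the $t$-degrees of $\bar f$, $\bar g$ or $D$, and naively the $\gcd$ need not commute with this. The resultant identity sidesteps the degree bookkeeping for $\bar f$ and $\bar g$ entirely, because it only uses that the specialized resultant is a nonzero constant in $t$; the single remaining delicate point is that $\sigma(D)$ does not collapse to $0$, which is precisely what the hypothesis that not both leading coefficients vanish guarantees. I would also be careful to state all $\gcd$ equalities up to units of the relevant fields, since over a field the $\gcd$ is only defined up to a nonzero scalar.
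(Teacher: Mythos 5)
Your argument is correct, and it is the standard proof of this specialization lemma; note that the paper itself does not prove the statement at all --- it simply imports it as ``Lemma 3 in [Sen2]'' in a directly generalized form --- so there is no in-paper proof to compare against. Your two ingredients are exactly the right ones: the leading-coefficient hypothesis forces $\lc_t(D)(t_0,s_0)\neq 0$ (so $\sigma(D)$ survives with full $t$-degree and divides out of the specialized gcd), and the B\'ezout identity $A\bar f+B\bar g=\Resultant_t(\bar f,\bar g)$ specializes to show that $\sigma(\bar f),\sigma(\bar g)$ cannot acquire a common factor of positive degree in $t$.

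The one point worth being explicit about is the caveat you already flag at the end: your argument proves the equality of gcd's in $\K(\overline{\cY})[t]$, i.e.\ up to a factor free of $t$, and this is genuinely the best one can do. The resultant hypothesis says nothing about contents: e.g.\ with $\overline{\cY}=\{y\}$, $f=yt+\tt$, $g=y$, one has $\gcd(f,g)=1$ and $\Resultant_t(\bar f,\bar g)=y\neq 0$, yet at $\tt=0$ the gcd in $\K[y][t]$ jumps to $y$. So the lemma is only true with gcd read modulo units of $\K(\overline{\cY})$, which is how the paper uses it (only $\deg_{\{t,s\}}$ of the specialized gcd is ever extracted). Your proof, read with that convention, is complete.
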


\para
In our case, we consider $\ggg^{\cPw,\cA}_{i}\in \K[\tt,\ss][h_1,h_2,s][t]$; note that  both polynomials are not identically zero because we already know that $\cSw$ is not a plane.
Let $\Upsilon_i$ be the leading coefficient of $\ggg^{\cPw,\cA}_{i}$ w.r.t. $t$, and let $${\mathfrak r}(\tt,\ss,h_1,h_2,s):=\res_t(\ggg^{\cPw,\cA}_{1},\ggg^{\cPw,\cA}_{2}).$$
In addition, since we have assumed that $\ggg^{\cPw,\cA}=1$, we know that $K\in \K[h_1,h_2,\tt,\ss]$. Let $\cZ_K$
be the zero set of at least one non-zero coefficient, w.r.t. $\{h_1,h_2\}$, of the homogeneous form of maximum degree  of $K$.

We define $\Delta_{0}^{*}$ as the zero set of all  coefficients of $\Upsilon_1,\Upsilon_2$  w.r.t. $\{h_1,h_2,s\}$ union the zero set of all coefficients of ${\mathfrak r}$    w.r.t. $\{h_1,h_2,s\}$ union $\cZ_K$.

\para

Now, we proceed with Step 4 of Method 5. We have assumed that $n_1$ and $n_2$ (in Step 4 of Method 5) have been already computed. So, it only remains to analyze the determination of $m_1:=\deg(\Phi_{2,3}(\cA))$. Therefore, we apply Method 2 to $\Phi_{2,3}(\cA)(t,s)$.

\para

We assume that none of the projective curves defined by the non-constant polynomials in $\{p_1,p_2,p_3,q_{4,1},q_{4,2},q_{4,3}\}$ passes through $(0:1:0)$. If this is not the case, we perform a suitable polynomial linear change in the parameters $\{t,s\}$. Note that, in this situation,
$\Phi_{2,3}(\cA)(t,s)$ satisfies the conditions in Step 0 of Method 2, seeing the projective curves in ${\mathbb P}^2(\overline{\K(\tt,\ss)})$ where $\overline{\K(\tt,\ss)}$ is the algebraic closure of ${\K(\tt,\ss)}$.  However,  it might happen for some particular values of $\{\tt, \ss\}$  then condition fails. In order to control this, we introduce the following set $\cZ_{\infty}$. We take the homogenization (in the variables $\{t, s\}$) of the numerators and denominators of $\chi_{i}^{\cA}(t,s,\tt,\ss)$, and we substitute them in $(0:1:0)$. Observe that, as remarked above, the resulting polynomials are not identically zero. Now, $\cZ_{\infty}$ is the  union of the  zero sets  in $\K^2$ of these polynomials.

\para

In Step 1 of Method 2, we take $\chi_{i}^{\cPw,\cA}$, $i=1,2$, and in Step 2 of Method 2, we compute
\[ H_i(t,s,\cX_i,\tt,\ss)=\numer(\chi_{i}^{\cPw,\cA}-\cX_i)\in \K[\tt,\ss,\cX_1,\cX_2,t,][s]. \]
For $i=1,2$, let $M_i(t,\cX_1,\cX_2,\tt,\ss)$ be the leading coefficient of $H_i$ w.r.t. $s$. Then, we define $\Delta_{1}^{*}$ as the zero set of all
 coefficients of $M_1$   w.r.t. $\{t,\cX_1,\cX_2\}$ union the zero set of all
 coefficients of $M_2$   w.r.t. $\{t,\cX_1,\cX_2\}$ union $\cZ_{\infty}$ .

\para

\noindent In Step 3 of Method 2,  the resultant polynomial $R$  is computed. We observe that since $\ggg^{\cPw,\cA}=1$,  $R$ is not identically zero.
We see $R$
as a polynomial in $\K[\tt,\ss][t,\cX_1,\cX_2]$, and hence  we denote it  by $R(t,\cX_1,\cX_2,\tt,\ss)$.
Let $W(\cX_1,\cX_2,\tt,\ss)$ be the leading coefficient of $R$ w.r.t. $t$. Then, we define $\Delta_{2}^{*}$ as the zero set of all coefficients of $W$ w.r.t. $\{\cX_1,\cX_2\}$.

\para

\noindent In Step 4 of Method 2, first
 we express $R$ as a polynomial in $\{\cX_1,\cX_2\}$ as
\[ R=\sum_{(i,j)\in J} {\overline{a}_{i,j}}(t,\tt,\ss) \cX_{1}^{i} \cX_{2}^{j}, \]
where we collect the non-zero coefficients of $R$ w.r.t. $\{\cX_1,\cX_2\}$.
We want to control the behavior of the primitive part under specializations, which essentially means to control the content. More precisely, let
$$a(t,\tt,\ss)=\gcd(\{{\overline{a}_{i,j}}\,|\,(i,j)\in J\})=\content_{\{\cX_1,\cX_2\}}(R),$$
and let
$$ {a}_{i,j}(t,\tt,\ss)=\dfrac{{\overline{a}_{i,j}}(t,\tt,\ss)}{a(t,\tt,\ss)}.$$
Let $N(\tt,\ss)$ be the leading coefficient of $a$ w.r.t. $t$. We analyze (under specializations) the gcd of $\{ \overline{a}_{i,j}\,|\,(i,j)\in J\}$.
We distinguish several cases depending on the cardinality of $J$; we observe that  $\card(J)\not=1$ since $\deg(\Phi_{2,3}(\cPw(t_0,s_0)))>0$.



\para

\noindent {\sf [Case 1]} Let $\card(J)=2$; say $J=\{(i_0,j_0),(i_1,j_1)\}$. We apply Lemma \ref{lemma3-tracing} (i.e., the adaptation of Lemma 3 in \cite{Sen2}) to $\overline{a}_{i_0,j_0},\overline{a}_{i_1,j_1}$, seen as polynomials in $\K[\tt,\ss][t]$. Let $L_0(\tt,\ss)$ be the leading coefficient of $\overline{a}_{i_0,j_0}$ w.r.t. $t$, $L_1(\tt,\ss)$ be the leading coefficient of $\overline{a}_{i_1,j_1}$ w.r.t. $t$,  and let $S(\tt,\ss)=\res_{t}(a_{i_0,j_0},a_{i_1,j_1})$. Then, we define
$\Delta_{3}^{*}$ as the zero set of $\{L_0,L_1\}$ union the zero set of $S$, and $\Delta_{4}^{*}$ as the zero set of $N$ (see above).

\para

\noindent {\sf [Case 2]} Let $\card(J)>2$; say $J=\{(i_k,j_k)\}_{k=1,\ldots,\ell},$ with  $\ell>2$. We apply Lemma 9  in \cite{PS-grado}. For convenience of the reader we recall here the part of that lemma that we will use.

\para

\begin{lemma}\label{lemma-generalizada} {\sf (Lemma 9 in \cite{PS-grado})}  Let $f_i\,\in\,\K[\tt,\ss][t]\setminus \{0\}$, $f_i=\bar{f}_i\cdot \gcd(f_1,\ldots,f_m)$,
\,$i=1,\ldots,m$. Let $(t_0,s_0)\in \K^2$ be such that the leading
coefficient of $f_1$ w.r.t. $t$ does not vanish at $(t_0,s_0)$. If   $\Resultant_{t}(\bar{f}_1,
\bar{f}_2+\sum_{i=3}^{m}W_{i-2}\bar{f}_i)(t_0,s_0)\not=0,$ where
$W_{j},\,\,\,j=1,\ldots,m-2,$ are new variables, then
$\gcd(f_1,\ldots,f_m)(t_0,s_0)=\gcd(f_1(t_0,s_0,t),\ldots,f_m(t_0,s_0,t)).$
\end{lemma}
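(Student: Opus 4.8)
The plan is to reduce the $m$-polynomial statement to the two-polynomial case already recorded as Lemma \ref{lemma3-tracing}, by collapsing $f_2,\ldots,f_m$ into a single generic combination governed by the auxiliary variables $W_1,\ldots,W_{m-2}$. All gcd's below are taken with respect to $t$, so that $f_i=\bar f_i\cdot d$ with $d:=\gcd(f_1,\ldots,f_m)$ and $\gcd(\bar f_1,\ldots,\bar f_m)=1$. I would set
\[ F:=f_2+\sum_{i=3}^{m}W_{i-2}f_i\in\K[\tt,\ss][W_1,\ldots,W_{m-2}][t],\qquad \bar F:=\bar f_2+\sum_{i=3}^{m}W_{i-2}\bar f_i, \]
so that $F=\bar F\,d$, and try to show $d(t_0,s_0)=\gcd(f_1(t_0,s_0,t),F(t_0,s_0,t))=\gcd(f_1(t_0,s_0,t),\ldots,f_m(t_0,s_0,t))$.

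The one extra ingredient I would isolate is an elementary fact about generic linear combinations: for polynomials $g_1,\ldots,g_m$ in $t$ over an arbitrary field $L$, one has $\gcd(g_1,\,g_2+\sum_{i=3}^{m}W_{i-2}g_i)=\gcd(g_1,\ldots,g_m)$, where the left gcd is formed over $L(W_1,\ldots,W_{m-2})[t]$ and in fact descends to $L[t]$. The substantive direction is that a monic $t$-divisor $h$ of the combination that also divides $g_1$ must divide every $g_i$. Indeed, $h$ divides $g_1\in L[t]$, and since $L$ is algebraically closed in the purely transcendental extension $L(W_1,\ldots,W_{m-2})$, the roots of $h$ are algebraic over $L$, forcing $h\in L[t]$; writing $g_2+\sum_{i=3}^{m}W_{i-2}g_i=h\cdot Q$ with $Q\in L[W_1,\ldots,W_{m-2}][t]$ and comparing the coefficients of the indeterminates $W_j$ then yields $h\mid g_i$ for each $i$. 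Applying this over $L=\K(\tt,\ss)$ to the coprime family $\bar f_1,\ldots,\bar f_m$ gives $\gcd(\bar f_1,\bar F)=1$, and hence $\gcd(f_1,F)=d$ over $\K(\tt,\ss,W_1,\ldots,W_{m-2})[t]$.

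Next I would invoke Lemma \ref{lemma3-tracing} with $f=f_1$, $g=F$, and $\overline{\cY}=\{W_1,\ldots,W_{m-2}\}$. Both hypotheses are met: the leading coefficients of $f_1$ and $F$ w.r.t.\ $t$ do not both vanish at $(t_0,s_0)$ because, by assumption, $\lc_t(f_1)(t_0,s_0)\neq 0$; and $\Resultant_t(\bar f_1,\bar F)(t_0,s_0)=\Resultant_t\!\big(\bar f_1,\,\bar f_2+\sum_{i=3}^{m}W_{i-2}\bar f_i\big)(t_0,s_0)\neq 0$ is exactly the resultant hypothesis of the statement. The lemma then yields
\[ d(t_0,s_0)=\gcd(f_1,F)(t_0,s_0)=\gcd\big(f_1(t_0,s_0,t),\,F(t_0,s_0,t)\big). \]
Since $\lc_t(f_1)(t_0,s_0)\neq 0$ guarantees $f_1(t_0,s_0,t)\neq 0$, I would finally apply the generic-combination fact a second time, now over $L=\K$ to the specialized polynomials $f_1(t_0,s_0,t),\ldots,f_m(t_0,s_0,t)\in\K[t]$, obtaining $\gcd(f_1(t_0,s_0,t),F(t_0,s_0,t))=\gcd(f_1(t_0,s_0,t),\ldots,f_m(t_0,s_0,t))$. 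Chaining the two identities gives $\gcd(f_1,\ldots,f_m)(t_0,s_0)=\gcd(f_1(t_0,s_0,t),\ldots,f_m(t_0,s_0,t))$, as claimed.

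The step deserving the most care, and the real heart of the argument, is the generic linear combination lemma, in particular the descent claim that a $\K(W_1,\ldots,W_{m-2})[t]$-divisor of a polynomial with coefficients in the ground field actually lies in $\K[t]$, together with the coefficient-comparison that propagates divisibility from the combination to each $f_i$. Everything else is routine once Lemma \ref{lemma3-tracing} is available; the only bookkeeping worth watching is that the gcd normalization (monic in $t$) is kept consistent across the two specialization steps, and that the hypothesis $\lc_t(f_1)(t_0,s_0)\neq 0$ is used both to feed Lemma \ref{lemma3-tracing} and to ensure the specialized family is nonzero.
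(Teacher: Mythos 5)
Your argument is correct, but note that the paper itself does not prove this statement: it is imported verbatim as Lemma~9 of \cite{PS-grado} (``for convenience of the reader we recall here the part of that lemma that we will use''), so there is no in-paper proof to match against. What you supply is a self-contained derivation, and it follows exactly the route that the shape of the hypothesis suggests: collapse $f_2,\ldots,f_m$ into the generic combination $F=f_2+\sum_{i\ge 3}W_{i-2}f_i$, prove that $\gcd(g_1,g_2+\sum W_{i-2}g_i)=\gcd(g_1,\ldots,g_m)$ over any field $L$ (via the relative algebraic closedness of $L$ in $L(W_1,\ldots,W_{m-2})$ plus coefficient comparison in the $W_j$), apply this over $\K(\tt,\ss)$ to get $\gcd(f_1,F)=\gcd(f_1,\ldots,f_m)$, invoke Lemma~\ref{lemma3-tracing} with $\overline{\cY}=\{W_1,\ldots,W_{m-2}\}$, and then apply the combination lemma a second time over $\K$ to the specialized family. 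Each step checks out; the two hypotheses of Lemma~\ref{lemma3-tracing} are met for exactly the reasons you give, and your descent argument for the divisor $h$ and the coefficient comparison in $Q\in L[W_1,\ldots,W_{m-2}][t]$ (legitimate because division by a monic $h\in L[t]$ stays in $L[W_1,\ldots,W_{m-2}][t]$) are both sound. The only point I would pin down more explicitly than you do is where the gcd lives: for the resultant hypothesis to be specializable at $(t_0,s_0)$ the cofactors $\bar f_i$ must lie in $\K[\tt,\ss][t]$, so $\gcd(f_1,\ldots,f_m)$ should be taken in the UFD $\K[\tt,\ss][t]$ rather than monic over $\K(\tt,\ss)$; this is harmless because any content factor divides $\lc_t(f_1)$ and hence cannot vanish at $(t_0,s_0)$, and the final identity is in any case only claimed up to a nonzero constant, but it is worth a sentence.
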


\para

Thus, we apply the lemma to $\{\overline{a}_{i_k,j_k}\}_{k=1,\ldots,\ell}$ seen as polynomials in $\K[\tt,\ss][t]$. Let $L(\tt,\ss)$ be the leading coefficient of $\overline{a}_{i_1,j_1}$ w.r.t. $t$,   and let $$\overline{S}(\tt,\ss,W_1,\ldots,W_{\ell-2})=\res_{t}\left(a_{i_1,j_1},a_{i_2,j_2}+\sum_{k=3}^{\ell}W_{k-2}a_{i_k,j_k}\right).$$
We define $\Delta_{3}^{*}$ as the zero set of all  coefficients of $\overline{S}$ w.r.t. $\{W_1,\ldots,W_{\ell-2}\}$,
and $\Delta_{4}^{*}$ as the zero set of $L$ union the zero set of $N$.

\para

Note that, since $\cS$ is irreducible, and $\cPw(t,s)$ is a generic element of $\cSw$, we have the following lemma.

\para

\begin{lemma}\label{lemma-final}
Let $(t_0,s_0)\in \Lambda_1$ be such that
\[ \deg(\Phi_{2,3}(\cPw(t_0,s_0)))=\deg_t\left(\frac{R(t,\cX_1,\cX_2,\tt,\ss)}{a(t,\tt,\ss)}\right). \]It holds that $\cPw(t_0,s_0)$ is a simple point of $\cSw$.
\end{lemma}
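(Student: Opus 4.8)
The plan is to read off the simplicity of $\cPw(t_0,s_0)$ from the multiplicity formula (Corollary~\ref{cor-1-formula}), by comparing the genuine degree at the specialized parameter with the generic degree computed symbolically. First I would use that $\cSw$ is irreducible, so its singular locus is a proper closed subset; since the coordinates of $\cA=\cPw(\tt,\ss)$ are transcendental over $\K$, the generic point $\cA$ is a simple point of $\cSw$, that is $\mult(\cA,\cSw)=1$. As we are in the case $\ggg^{\cPw,\cA}=1$, Corollary~\ref{cor-1-formula}(1) then yields
$$\deg(\Phi_{2,3}(\cA))=\deg(\Phi_{2,3}(A_0))-\deg(\cPw)=(\deg(\cSw)-1)\,\deg(\cPw).$$
Running Step~4 of Method~2 symbolically (with $\tt,\ss$ kept as parameters) computes precisely this number as $\deg_t(\pp_{\{\cX_1,\cX_2\}}(R))=\deg_t(R/a)$, so the right-hand side in the statement is exactly the generic, and maximal, value attained by $\deg(\Phi_{2,3})$ on $\cSw$.

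Next I would apply Corollary~\ref{cor-1-formula}(1) to the specialized point $A=\cPw(t_0,s_0)$, which is a well-defined point of $\cSw$ because $(t_0,s_0)\in\Lambda_1=\K^2\setminus\Delta_4$. Granting for the moment that $\ggg^{\cPw,A}=1$, this gives $\deg(\Phi_{2,3}(A))=(\deg(\cSw)-\mult(A))\,\deg(\cPw)$. Since every point of $\cSw$ satisfies $\mult(A)\ge 1$, I obtain $\deg(\Phi_{2,3}(A))\le(\deg(\cSw)-1)\deg(\cPw)=\deg_t(R/a)$, with equality if and only if $\mult(A)=1$. The hypothesis of the lemma is exactly this equality, and it therefore forces $\mult(\cPw(t_0,s_0))=1$, i.e. the point is simple on $\cSw$.

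The delicate point I expect to be the main obstacle is the legitimacy of assuming $\ggg^{\cPw,A}=1$ at the specialized parameter, since the gcd defining $\ggg$ can jump for special values of $(\tt,\ss)$. By Corollaries~\ref{cor-conos} and~\ref{cor-conos-2}, such a jump can occur only when $\cSw$ is a cone and $\cPw(t_0,s_0)$ is its unique vertex (the locus recorded in $\Delta_0^{*}$). At that vertex $\mult=\deg(\cSw)>1$ and, by Remark~\ref{remark-jac}, the Jacobian of $\Phi_{2,3}$ vanishes identically, so the image of $\Phi_{2,3}$ collapses to a curve and its degree cannot equal the positive generic value $\deg_t(R/a)$; hence the equality hypothesis excludes this non-simple case as well, and the conclusion holds in all situations. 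I would close by remarking that the coincidence between the symbolic degree $\deg_t(R/a)$ and the actually specialized degree is exactly what the exceptional loci $\Delta_1^{*},\ldots,\Delta_4^{*}$ are designed to guarantee, but that for the bare statement only the inequality $\deg(\Phi_{2,3}(\cPw(t_0,s_0)))\le\deg_t(R/a)$ together with the equality characterisation of simplicity coming from Corollary~\ref{cor-1-formula} is required.
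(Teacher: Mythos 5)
Your proposal is correct and follows essentially the same route as the paper, which states Lemma~\ref{lemma-final} as an immediate consequence of the genericity of $\cA=\cPw(\tt,\ss)$ (a generic point of the irreducible surface $\cSw$ is simple) combined with the multiplicity formula of Corollary~\ref{cor-1-formula}, offering no further written proof. Your argument simply makes that one-line justification explicit, and is in fact slightly more careful than the paper in disposing of the degenerate case $\ggg^{\cPw,\cPw(t_0,s_0)}\neq 1$ via Corollaries~\ref{cor-conos} and~\ref{cor-conos-2}.
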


\para

We finish this subsection with the following theorem.

\para

\begin{theorem}\label{th-omega}
 $\forall \, (t_0,s_0)\in \Lambda_1\setminus \Delta^{*}$, $\cPw(t_0,s_0)$ is a simple point of $\cSw$.
\end{theorem}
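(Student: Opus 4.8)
The plan is to reduce the statement to Lemma \ref{lemma-final}, which says that $\cPw(t_0,s_0)$ is simple as soon as $\deg(\Phi_{2,3}(\cPw(t_0,s_0)))$ attains the generic value $\deg_t\!\left(R(t,\cX_1,\cX_2,\tt,\ss)/a(t,\tt,\ss)\right)$ produced by running Method 5 (and inside it Method 2) on the generic point $\cA=\cPw(\tt,\ss)$. Thus the whole argument is a step-by-step check that every elimination operation performed on $\cA$ commutes with the specialization $(\tt,\ss)\mapsto(t_0,s_0)$, the exceptional locus of each operation having been collected, by construction, in one of the sets $\Delta_0^*,\ldots,\Delta_4^*$ whose union is $\Delta^*$. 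Fix $(t_0,s_0)\in\Lambda_1\setminus\Delta^*$ and write $A=\cPw(t_0,s_0)$.

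First I would settle the gcd step (Step 2 of Method 5), i.e. that $\ggg^{\cPw,A}=1$. Since $(t_0,s_0)\notin\Delta_0^*$, neither leading coefficient $\Upsilon_1,\Upsilon_2$ of $\ggg^{\cPw,\cA}_1,\ggg^{\cPw,\cA}_2$ (w.r.t. $t$) vanishes at $(t_0,s_0)$, and the resultant ${\mathfrak r}$ does not vanish there either; Lemma \ref{lemma3-tracing} then yields that the specialized gcd equals the specialization of the generic gcd. Because $(t_0,s_0)\notin\cZ_K\subset\Delta_0^*$ forces the top-degree form of the generic gcd to survive specialization as a unit, we conclude $\ggg^{\cPw,A}=1$, so we are legitimately in the branch Step 4 of Method 5 and may invoke Lemma \ref{lemma-final}. (Equivalently, by Corollaries \ref{cor-conos} and \ref{cor-conos-2}, $(t_0,s_0)$ is not the parameter of a cone vertex.)

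Next I would track Method 2 applied to $\Phi_{2,3}(A)(t,s)$. The Step 0 normalization (no relevant curve through $(0:1:0)$) persists because $(t_0,s_0)\notin\cZ_\infty\subset\Delta_1^*$. At Step 2, avoiding $\Delta_1^*$ keeps the leading coefficients $M_1,M_2$ (w.r.t. $s$) nonzero, so $R=\res_s(H_1,H_2)$ specializes to the resultant of the specialized $H_i$. At Step 3, avoiding $\Delta_2^*$ keeps the leading coefficient $W$ (w.r.t. $t$) of $R$ nonzero, so $\deg_t R$ is preserved. The delicate point is Step 4: I must show the content $a(t,\tt,\ss)=\content_{\{\cX_1,\cX_2\}}(R)$ specializes correctly, so that $\deg_t(R/a)$ is preserved. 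Here I split on $\card(J)$: for $\card(J)=2$ I apply Lemma \ref{lemma3-tracing} to $\overline{a}_{i_0,j_0},\overline{a}_{i_1,j_1}$, whose hypotheses hold because $(t_0,s_0)$ avoids $\Delta_3^*$ (the leading coefficients $L_0,L_1$ and the resultant $S$) and $\Delta_4^*$ (the leading coefficient $N$ of $a$); for $\card(J)>2$ I use Lemma \ref{lemma-generalizada} with the auxiliary variables $W_j$, the genericity of the $W_j$-resultant $\overline{S}$ being guaranteed by $(t_0,s_0)\notin\Delta_3^*$ and the leading coefficients by $\Delta_4^*$.

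Combining these, every operation commutes with the specialization, so $\deg(\Phi_{2,3}(\cPw(t_0,s_0)))=\deg_t(R/a)$, and Lemma \ref{lemma-final} gives that $\cPw(t_0,s_0)$ is a simple point of $\cSw$. I expect the main obstacle to be the Step 4 content computation: contents and primitive parts do not in general commute with specialization, and it is precisely to force this commutation that the two gcd-specialization lemmas, together with the case split on $\card(J)$, are invoked; the remaining verifications (leading coefficients and resultants behaving well off their vanishing loci) are routine, though lengthy, and are exactly what the defining conditions of $\Delta_0^*,\ldots,\Delta_2^*$ encode.
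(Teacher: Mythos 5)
Your proposal is correct and follows essentially the same route as the paper's proof: specialization at $(t_0,s_0)$ is pushed through the gcd step via Lemma \ref{lemma3-tracing} (using $\Delta_0^*$ and $\cZ_K$), then through Method 2 via $\cZ_\infty$, $M_1,M_2$, $W$, and the content $a$ (using $\Delta_1^*,\ldots,\Delta_4^*$ with the same $\card(J)$ case split and Lemma \ref{lemma-generalizada}), concluding by Lemma \ref{lemma-final}. The only nitpick is that avoiding $\Delta_0^*$ guarantees merely that \emph{not both} $\Upsilon_1,\Upsilon_2$ vanish identically at $(t_0,s_0)$ rather than that neither does, but this is exactly the hypothesis Lemma \ref{lemma3-tracing} requires, so your argument is unaffected.
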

\begin{proof} Let $(t_0,s_0)\in \Lambda_1\setminus \Delta^{*}$; throughout the proof, we denote $\cPw(t_0,s_0)$ by $\cA_0$. Since $(t_0,s_0)\in \Lambda_1$, then $(t_0,s_0)\not\in \Delta_4$, and hence  $\cA_0$ is well defined and it is  a point on $\cSw$. Moreover, $\chi_{i}^{\cA_0}(t,s)$ are also well-defined. On the other hand, since $(t_0,s_0)\not\in \Delta_{0}^{*}$, then ${\mathfrak r}(t_0,s_0,h_1,h_2,s)$ does not vanish and at least one the polynomials $\Upsilon_1(t_0,s_0,h_1,h_2,s)$, $\Upsilon_2(t_0,s_0,h_1,h_2,s)$, does not vanish. Then, by Lemma \ref{lemma3-tracing},
\[ K(t,s,h_1,h_2,t_0,s_0)=\gcd(\ggg_{1}^{\cPw,\cA_0},\ggg_{2}^{\cPw,\cA_0}). \]
Furthermore, since $(t_0,s_0)\not\in\Delta_{0}^{*}$, then $(t_0,s_0)\not\in \cZ_K$, and hence
\[ \deg_{\{t,s\}}(K(t,s,h_1,h_2,\tt,\ss))=\deg_{\{t,s\}}(K(t,s,h_1,h_2,t_0,s_0)). \]
Therefore,
\[ \ggg^{\cPw,\cA_0}(t,s,h_1,h_2)=\ggg^{\cPw,\cA}(t,s,h_1,h_2,s_0,t_0)=1.\]
Note that since $(t_0,s_0)\not\in \Delta_{1}^{*}$, then $(t_0,s_0)\not\in \cZ_{\infty}$ and hence the conditions in Step 0, Method 2, are satisfied. Moreover,  neither $M_1(t,\cX_1,\cX_2,t_0,s_0)$ nor $M_2(t,\cX_1,\cX_2,t_0,s_0)$ vanish. Similarly, since
$(t_0,s_0)\not\in \Delta_{2}^{*}$, $W(\cX_1,\cX_2,t_0,s_0)$ does not vanish.

\noindent If we are in {\sf case 1}, since $(t_0,s_0)\not\in \Delta_{3}^{*}$ we get that  $L_0(t_0,s_0)\neq 0$ or $L_1(t_0,s_0)\neq 0$, and $S(t_0,s_0)\neq 0$. Thus, by Lemma \ref{lemma3-tracing}, we get that
$a(t,t_0,s_0)=\gcd({\overline{a}_{i_0,j_0}}(t,t_0,s_0),{\overline{a}_{i_1,j_1}}(t,t_0,s_0))$. Moreover, by well-know properties of resultants, we get that (up to multiplication by a non-zero constant), $R(t,\cX_1,\cX_2,t_0,s_0)=\res_s(H_1(t,s,\cX_1,t_0,s_0),H_2(t,s,\cX_2,t_0,s_0))$.  Furthermore, since $W(\cX_1,\cX_2,t_0,s_0)\neq 0$ (see above),
\[ \deg_t(R(t,\cX_1,\cX_2,t_0,s_0))=\deg_t(R(t,\cX_1,\cX_2,\tt,\ss)).\]
On the other hand, $(t_0,s_0)\not\in \Delta_{4}^{*}$ implies that $\deg_t(a(t,\tt,\ss))=\deg_t(a(t,t_0,s_0))$. Summarizing,
\[\deg(\Phi_{2,3}(\cA_0))= \deg_t(\pp_{\{\cX_1,\cX_2\}}(R(t,\cX_1,\cX_2,t_0,s_0)))=\]
\[\deg_t(\pp_{\{\cX_1,\cX_2\}}(R(t,\cX_1,\cX_2,\tt,\ss)))=\deg(\Phi_{2,3}(\cA)). \]
Therefore, by Lemma \ref{lemma-final}, $\cA_0$ is simple.

\noindent If we are in {\sf case 2},
since $(t_0,s_0)\not\in \Delta_{3}^{*}$ we get that  $\overline{S}(t_0,s_0,W_1,\ldots,W_{\ell-2})\neq 0$.  Since $(t_0,s_0)\not\in \Delta_{4}^{*}$ we know that $L(t_0,s_0)\neq 0$ and $N(t_0,s_0)\neq 0$.  Thus, by Lemma \ref{lemma-generalizada}, we get that
$a(t,t_0,s_0)=\gcd(\{{\overline{a}_{i,j}(t,t_0,s_0)\,|\,(i,j)\in J\}})$.
  From here the proof follows as in the {\sf case 1}.
 \end{proof}

\para

 \begin{center}
 {\sf First level (Part II): decomposition of $\Delta^*\setminus \Delta_4$}
 \end{center}

\para

We  decompose $\Delta^*$ as union of irreducible closed sets; note that they are of dimension less or equal 1.
Let $\cc$ be an irreducible curve in $\Delta^*$. If $\cc\subset \Delta_4$, there is nothing to do. If not,
we compute the intersection  of $\Delta_4$  and $\cc$ (note that $\Delta_4$ is empty or a plane curve). This intersection would be either empty or finitely many points. For an open subset  of $\cc$, the degree of the corresponding map $\Phi_{2,3}$ would be invariant, and hence all points in the open subset would generate points on $\cS$  with the same multiplicity. The complementary of this open subset is now either empty or a finite set of points. So, if it is not empty, we apply the formula to each of the finitely many points in the closed set as well as for those points in the zero-dimensional components of $\Delta^*$.

\para

In order to compute the open subset of $\cc$, we do an analogous reasoning as in the previous subsection.

\para

\noindent {\sf [Rational case]} If $\cc$ is rational, we compute a proper normal rational parametrization $\cQ(\lambda)$  of $\cc$  (see \cite{libro}). Then, we apply Method 5 to $\cH(\lambda):=\cPw(\cQ(\lambda))$; say that $\cH(\lambda)$ is expressed as:
$$\cH(\lambda)=\left(\dfrac{\varphi_1(\lambda)}{\phi_1(\lambda)},\dfrac{\varphi_2(\lambda)}{\phi_2(\lambda)},
\dfrac{\varphi_3(\lambda)}{\phi_3(\lambda)}
\right)$$
where $\gcd(\varphi_i,\phi_i)=1, i=1,2,3$. Note that Step 1 as well $n_1, n_2$ (in Steps 3, 4) were already computed in Level I (first part). In Step 2, we have to compute $\ggg^{\cPw,\cH(\lambda)}$. For that we distinguish two cases:
\begin{enumerate}
\item  if $\cc\not\subset \Delta^{*}_{0}$ (see first part of the proof of Theorem \ref{th-omega}) then, if $(t_0,s_0):=\cQ(\lambda_0)\not\in {(\cc\cap \Delta^{*}_{0})\setminus \Delta_4}$, it holds that $\ggg^{\cPw,\cH(\lambda_0)}=1$. For the others, the finitely many (maybe empty) points in ${(\cc\cap \Delta^{*}_{0})\setminus \Delta_4}$, one applies directly the whole Method 5.  Observe the connection with cones; see Corollaries \ref{cor-conos} and \ref{cor-conos-2}.
    \item If $\cc\subset \Delta^{*}_{0}$, we repeat the reasoning done for the computation of $\Delta^*$ in Part I of Level 1st.
    That is, we compute $\ggg^{\cPw,\cH(\lambda)}$ generically (i.e.,   treating $\lambda$ as a transcendental element).
    Note that this, essentially, means computing a gcd in unique factorization domain $\K[\lambda,h_1,h_2,s,t]$ that
    (see e.g. Section 4.1. in \cite{Winkler}) can be reduced to the computation in the Euclidean domain $\K(\lambda,h_1,h_2,s)[t]$.
    For  an open subset  $\tilde{\Delta}$ of $\cc$, $\ggg^{\cPw,\cH(\lambda)}=1$, and we can go ahead through Step 4.
    For the complementary closed set (that is empty or finite) we execute the whole Method 5.
\end{enumerate}
Therefore, after performing the above considerations, we can assume that $\ggg^{\cPw,\cH(\lambda)}=1$. Thus, we pass to Step 4, and hence it only remains to apply Method 2 to compute $\deg(\Phi_{2,3}(\cH(\lambda))$, where $\lambda$ belongs to a non-empty open subset of $\K$; namely those $\lambda$ such that ${\cQ}(\lambda)\in \cc \setminus \Delta^{*}_{0}$ if we come from case 1 (above) or ${\cQ}(\lambda)\in\cc\setminus \tilde{\Delta}$ if we come from case 2 (above). We observe that all computations can be carried out:  we have to compute resultants in the unique factorization domain $\K[\lambda,t,\cX_1,\cX_2][s]$ and gcds in the Euclidean domain $\K(\lambda)[t]$.

\para

\noindent {\sf [Positive genus case]} If $\cc$ is not rational, we  work over the field of rational functions $\K(\cc)$ of the curve (see \cite{libro}). Let $f(t,s)$ be the defining polynomial of $\cc$, then $\K(\cc)$ is the quotient field of $\K[t,s]/(f)$.
Then, we apply Method 5 to $\cP(\tv,\sv)$, where  $\tv,\sv\in \K(\cc)$ are representatives of the equivalent classes of $t,s$ respectively, i.e., $\tv-t$, and $\sv-s$ belong to the ideal $(f)$. We recall  that the arithmetic in the field $\K(\cc)$ can be executed by using
the defining polynomial of $\cc$. We observe that all computations can be carried out:  we have to compute gcds in $\K(\cc)[h_1,h_2,s,t]$ (which can be performed in the Euclidean domain $\K(\cc)(h_1,h_2)[t]$),
 resultants in the unique factorization domain $\K(\cc)[t,\cX_1,\cX_2][s]$ and gcds in the Euclidean domain $\K(\cc)[t]$.

\para

For each 1-dimensional component $\cc$ of $\Delta^*$ we will get an open subset where all points (i.e., parameter values) behave the same; that is all have the same multiplicity. So each of these open subsets will be part of $\cF_{k}^{1}$ for some $k$. The complementary of these open sets are either empty or zero-dimensional. So we will have, in the worst case, a set of finitely many parameter values to be classified. For each of them we apply Method 5, and we determine their multiplicity. Finally, they are included in the corresponding $\cF_{k}^{1}$.

\para

\begin{center}
{\sf Second, third and fourth levels}
\end{center}

\para

Let  $\Delta_4\setminus \cB_a\neq \emptyset$. We want to decompose $\Lambda_2$ (i.e., $\Delta_4\setminus \Delta_3$). We observe that $\Lambda_2$ would be either empty or $1$-dimensional; since $\Delta_i$ are either empty or plane curves. Clearly, the interesting case is when $\dim(\Lambda_2)=1$. Then, for each irreducible component of $\Lambda_2$ we proceed as in the first level (part II). Finally, note that the same argument and strategy is valid for the third and the fourth levels.

\section{Computing the  $\cP$-singularities at infinity}\label{sec-singular-infinity}

In this section, we show how to proceed with the steps 2 and 3 of our strategy (see Section \ref{sec-notation}). So, first we analyze whether $A=(0:1:0)$ is a $\cP$-singularity. For this purpose, we  check whether $A\in \cB$. If $A\not\in \cB$, then at least one of the polynomials $\cpp_i$ does not vanish on $A$ (say  w.l.o.g. $\cpp_4$). Then, we  replace $\cP(\ttt,\sss,\vvv)$ by
\[\left(\frac{\cpp_1(\frac{\ttt}{\sss},1,\frac{\vvv}{\sss})}{\cpp_4(\frac{\ttt}{\sss},1,\frac{\vvv}{\sss})}:
\frac{\cpp_2(\frac{\ttt}{\sss},1,\frac{\vvv}{\sss})}{\cpp_4(\frac{\ttt}{\sss},1,\frac{\vvv}{\sss})}:\frac{\cpp_3(\frac{\ttt}{\sss},1,\frac{\vvv}{\sss})}{ \cpp_4(\frac{\ttt}{\sss},1,\frac{\vvv}{\sss})}:1\right) \]
So, introducing the notation $\{\tilde{t}=\frac{\ttt}{\sss},\tilde{v}=\frac{\vvv}{\sss}\}$ and $\tilde{p}_i(\tilde{t},\tilde{v})=\cpp_i(\tilde{t},1,\tilde{v})$, we get
\[ \tilde{\cP}_{x_4}(\tilde{t},\tilde{v})=\left(\dfrac{\tilde{{p}}_1(\tilde{t},\tilde{v})}{\tilde{{p}}_4(\tilde{t},\tilde{v})},
\dfrac{\tilde{{p}}_2(\tilde{t},\tilde{v})}{\tilde{{p}}_4(\tilde{t},\tilde{v})},
\dfrac{\tilde{{p}}_3(\tilde{t},\tilde{v})}{\tilde{{p}}_4(\tilde{t},\tilde{v})}  \right)\]
that parametrizes $\cSw$.  Similarly, if necessary, we introduce $\tilde{\cP}_{x_i}(\tilde{t},\tilde{v})$ with $i=1,2,3$. Now, we  apply Method 5 to compute $${\mult((0:1:0))=\mult(\tilde{\cP}_{x_4}(0,0),\cSw)=\mult(\cP(0,1,0),\cS).}$$

\para

Now, it only remains to analyze the points in
${\cal E}=\{(1:\lambda_0:0)\,|\, \lambda_0\in \K\}.$
For that, first we determine those points in $\cal E$ that are base points, namely
\[ {\cal E}^*=\{(1:\lambda_0:0)\, |\,\gcd(\cpp_1(1,\lambda,0),\cpp_2(1,\lambda,0),\cpp_3(1,\lambda,0),\cpp_4(1,\lambda,0))(\lambda_0)=0\} \]
There exists  $i$ such that $\cpp_i(1,\lambda,0)$ is not identically zero, since otherwise $\vvv$ would divide  $\gcd(\cpp_1,\ldots,\cpp_4)$, which is a contradiction. Let us assume w.l.o.g. that $\cpp_4(1,\lambda,0)$ is not identically zero.
 We then introduce the finite set
\[ {\cal E}^{**}=\{(1:\lambda_0:0)\, |\,\cpp_4(1,\lambda_0,0)=0\}\setminus {\cal E}^*, \]
and we proceed to compute the multiplicity of each $(1:\lambda_0:0)\in {\cal E}^{**}$. For that, we observe that there exists $j\neq 4$ such that $\cpp_j(1,\lambda_0,0)\neq 0$, and we  apply the multiplicity formula using the dehomogenization of $\cP(\ttt,\sss,\vvv)$ w.r.t. the $j$-component.

\para

To analyze the open subset ${\cal E}\setminus  {\cal E}^{**}$, we replace $\cP(\ttt,\sss,\vvv)$ by
\[\left(\frac{\cpp_1(1,\frac{\sss}{\ttt},\frac{\vvv}{\ttt})}{\cpp_4(1,\frac{\sss}{\ttt},\frac{\vvv}{\ttt})}:
\frac{\cpp_2(1,\frac{\sss}{\ttt},\frac{\vvv}{\ttt})}{\cpp_4(1,\frac{\sss}{\ttt},\frac{\vvv}{\ttt})}:\frac{\cpp_3(1,\frac{\sss}{\ttt},\frac{\vvv}{\ttt})}{ \cpp_4(1,\frac{\sss}{\ttt},\frac{\vvv}{\ttt})}:1\right) \]
So, introducing the notation $\{\hat{s}=\frac{\sss}{\ttt},\hat{v}=\frac{\vvv}{\ttt}\}$ and $\hat{p}_i(\hat{s},\hat{v})=\cpp_i(1,\hat{s},\hat{v})$, we get
\[ \hat{\cP}_{x_4}(\hat{s},\hat{v})=\left(\dfrac{\hat{p}_1(\hat{s},\hat{v})}{\hat{p}_4(\hat{s},\hat{v})},
\dfrac{\hat{p}_2(\hat{s},\hat{v})}{\hat{p}_4(\hat{s},\hat{v})},
\dfrac{\hat{p}_3(\hat{s},\hat{v})}{\hat{p}_4(\hat{s},\hat{v})}  \right)\]
that parametrizes $\cSw$. Similarly, if necessary, we introduce $\hat{\cP}_{x_i}(\hat{s},\hat{v})$ with $i=1,2,3$. Now, one has to proceed as in Section \ref{sec-sing}, Level 1 (Part II, case rational) with the rational curve ${\cal Q}(\lambda)=(\lambda,0)$.

\section{Algorithm and Example}\label{sec-algorithm}

 In this section we summarize all the previous ideas to derive an algorithm that we illustrate with a complete example. For this purpose, let  $\cS\subset \projtres$ be a projective surface, and $\cP(\ttt,\sss,\vvv)$ a parametrization of $\cS$  expressed as
\[ \cP(\ttt,\sss,\vvv)=\left(\cpp_1(\ttt,\sss,\vvv):\cpp_2(\ttt,\sss,\vvv):\cpp_3(\ttt,\sss,\vvv):\cpp_4(\ttt,\sss,\vvv)\right)\]
where $\cpp_i\in \K[\ttt,\sss,\vvv]$ are homogeneous polynomials of the same degree, and
 $\gcd(\cpp_1,\cpp_2,\cpp_3,\cpp_4)=1$. Let $\cB$ the zero set in $\projdos$ of $\{\cpp_1,\ldots,\cpp_4\}$. Then, the
 algorithm  decomposes $\projdos\setminus \cB$ as $$\projdos\setminus \cB=\cup_{k=1}^{\ell} \cSm_k$$ such that if $(\ttt_0:\sss_0:\vvv_0)\in \cSm_k$ then $\cP(\ttt_0,\sss_0,\vvv_0)$ is a point of $\cS$ of multiplicity $k$.

\para

As already remarked in Section \ref{sec-notation}, we assume that none of the polynomials $\cpp_i$ is zero or, more generally, that there do not exist
 $\cpp_i,\cpp_j$ and  $\lambda\in \K$ such that $\cpp_i=\lambda \cpp_j$. Note that this excluded situation corresponds to a plane,
  and hence $\cSm_1=\projdos$.

  \para

  In addition, we use the notation introduced in Section \ref{sec-notation}, namely the affine surfaces $\cS_{x_i}$, the affine rational parametrizations  $\cP_{x_i}(t,s)$, and  the polynomials $p_k, q_{i,j}$. Moreover, we also use the notation $\tilde{\cP}_{x_i}(\tilde{t},\tilde{v})$, $\hat{\cP}_{x_i}(\hat{s},\hat{v})$ (see Section \ref{sec-singular-infinity}). In this situation, the algorithm is as follows.

\para

\noindent \underline{\sf Algorithm}

\para

\hspace*{2cm} {\sf [Preparatory Steps]}

\vspace{1 mm}

\noindent {\sf [Step 0.]} If any of the projective curves defined by the non-constant polynomials in $\{p_1,p_2,p_3,q_{4,1},q_{4,2},q_{4,3}\}$ of the parametrization $\cPw$ passes through $(0:1:0)$ we perform a suitable polynomial linear change in the parameters $\{t,s\}$.

\vspace{1 mm}

\noindent {\sf [Step 1.]}  Apply Method 1 to compute $n_2:=\deg(\cPw)$ (see Section \ref{sec-formula}).

\vspace{1 mm}

\noindent {\sf [Step 2.]} Apply Method 4 to determine an affine point, say $A_0$, out of the affine surface $\cSw$  (take $A_0$ with non-zero components such that if the algorithm, in subsequent steps, requires a point in $\K^3\setminus \cS_{x_i}$ with $i\neq 4$ no further computation would be needed (see Remark \ref{rem-A0}) and apply Method 2 to compute $n_1:=\Phi_{2,3}(A_0)$ (see Section \ref{sec-formula}).

\vspace{1 mm}

\noindent {\sf [Step 3.]}
 Let $\Delta_i:=\{(t_0,s_0)\,|\,\lcm(q_{i,1},q_{i,2},q_{i,3})(t_0,s_0)=0\}, \,i=1,\ldots,4$.

\vspace{1 mm}

\noindent {\sf [Step 4.]}
 Let $\cB_a=\bigcap_{i=1}^{4} \Delta_i$ (see Lemma \ref{lemma-ptos-base-afines}) find $\cB$.

\para

\hspace*{2cm} {\sf [$\cP$-affine singularities (First level: part I)]}

\vspace{1 mm}

\noindent {\sf [Step 5.]} Compute (see Section \ref{sec-sing}) $\cA=\cPw(\tt,\ss)$; $\Phi_{2,3}(\cA)=(\chi_{1}^{\cA},\chi_{2}^{\cA})$;
$\ggg^{\cPw,\cA}_{i}=\numer(\chi_{i}^{\cA}(t,s,\tt,\ss)-\chi_{i}^{\cA}(h_1,h_2,\tt,\ss))$;
as well as  $K=\gcd(\ggg^{\cPw,\cA}_{1},\ggg^{\cPw,\cA}_{2}).$

\vspace{1 mm}

\noindent {\sf [Step 6.]} If $\deg_{\{t,s\}}(K)>0$ then {\sf return} $\cSm_1=\projdos$ ($\cS$ is a  plane).

\vspace{1 mm}

\noindent  {\sf [Step 7.]} Computation of  $\Delta_{0}^{*}$

\noindent\hspace*{2 mm} \parbox{15 cm} {{\sf [Step 7.1.]}
Compute the leading coefficient  $\Upsilon_i$ of $\ggg^{\cPw,\cA}_{i}$ w.r.t. $t$,

\noindent {\sf [Step 7.2.]} Compute ${\mathfrak r}:=\res_t(\ggg^{\cPw,\cA}_{1},\ggg^{\cPw,\cA}_{2}).$

\noindent {\sf [Step 7.3.]} $\cZ_K$
 is the zero set of at least one non-zero coefficient, w.r.t. $\{h_1,h_2\}$, of the homogeneous form of maximum degree  of $K$.

\noindent {\sf [Step 7.4.]} $\Delta_{0}^{*}$ is the zero set of all  coefficients of $\Upsilon_1,\Upsilon_2$  w.r.t. $\{h_1,h_2,s\}$ union the zero set of all coefficients of ${\mathfrak r}$    w.r.t. $\{h_1,h_2,s\}$ union $\cZ_K$. }

\noindent  {\sf [Step 8.]} Computation of  $\Delta_{1}^{*}$

\noindent\hspace*{2 mm} \parbox{15 cm} {{\sf [Step 8.1.]} Homogenize (w.r.t. $\{t, s\}$)  the numerators and denominators of $\chi_{i}^{\cA}$, and  substitute them in $(0:1:0)$. Take $\cZ_{\infty}$ as the union of the zero sets in $\K^2$ of these polynomials.

\noindent {\sf [Step 8.2.]}  Compute $H_i=\numer(\chi_{i}^{\cPw,\cA}-\cX_i)$ as well as  the leading coefficient $M_i$ of $H_i$ w.r.t. $s$.

\noindent {\sf [Step 8.3.]}  $\Delta_{1}^{*}$ is the zero set of all
 coefficients of $M_1$   w.r.t. $\{t,\cX_1,\cX_2\}$ union the zero set of all
 coefficients of $M_2$   w.r.t. $\{t,\cX_1,\cX_2\}$ union $\cZ_{\infty}$.
}

\noindent  {\sf [Step 9.]} Computation of  $\Delta_{2}^{*}$

\noindent\hspace*{2 mm} \parbox{15 cm}{{\sf [Step 9.1.]} Compute  $R:=\res_s(H_1,H_2)$ and  its leading coefficient $W$  w.r.t. $t$.

\noindent {\sf [Step 9.2.]}  $\Delta_{2}^{*}$ is the zero set of all coefficients of $W$ w.r.t. $\{\cX_1,\cX_2\}$.
}

\noindent  {\sf [Step 10.]} Computation of  $\Delta_{3}^{*}$ and $\Delta_{4}^{*}$

\noindent\hspace*{2 mm} \parbox{15 cm}{{\sf [Step 10.1.]} Compute the set $\{\overline{a}_{i,j}\,|\,(i,j)\in J\}$ of all  coefficients of $R$ w.r.t. $\{\cX_1,\cX_2\}$.

\noindent {\sf [Step 10.2.]} Compute $a=\gcd(\{\overline{a}_{i,j}\,|\,(i,j)\in J\})$ and $a_{i,j}=\frac{\overline{a}_{i,j}}{a}$.

\noindent {\sf [Step 10.3.]} Determine the leading coefficient $N$ of $a$ w.r.t. $t$.

}

\noindent\hspace*{2 mm} \parbox{15 cm}{  {\sf [Step 10.4.]} If $\card(J)=2$ (say $J=\{(i_0,j_0),(i_1,j_1)\}$)

\noindent\hspace*{4 mm} \parbox{15 cm}{{\sf [Step 10.4.1.]} Compute  the leading coefficient $L_j$ of $a_{i_j,j_j}$ w.r.t. $t$ $(j=0,1)$  and  $S=\res_{t}(a_{i_0,j_0},a_{i_1,j_1})$.

\noindent {\sf [Step 10.4.2.]} $\Delta_{3}^{*}$ is the zero set of $\{L_0,L_1\}$ union the zero set of $S$.

\noindent {\sf [Step 10.4.3.]}  $\Delta_{4}^{*}$ is the zero set of $N$.}

\noindent {\sf [Step 10.5.]} If $\card(J)>2$ (say  $J=\{(i_k,j_k)\}_{k=1,\ldots,\ell}$)

\noindent\hspace*{4 mm} \parbox{15 cm}{{\sf [Step 10.5.1.]} Compute  the leading coefficient
 $L$  of $a_{i_1,j_1}$ w.r.t. $t$ and $\overline{S}=\res_{t}(a_{i_1,j_1},a_{i_2,j_2}+\sum_{k=3}^{\ell}W_{k-2}a_{i_k,j_k}).$

\noindent {\sf [Step 10.5.2.]} $\Delta_{3}^{*}$ is the zero set of all  coefficients of $\overline{S}$ w.r.t. $\{W_1,\ldots,W_{\ell-2}\}$.

\noindent {\sf [Step 10.5.3.]}  $\Delta_{4}^{*}$ is the zero set of $L$ union the zero set of $N$.}
}

\noindent {\sf [Step 11.]} Set $\Delta^*=\cup_{i=0}^{4} \Delta_{i}^{*}$, and include $\mathbf{j}((\K^2\setminus\Delta_4) \setminus \Delta^*)$ in $\cSm_1$.
\para

\hspace*{2cm} {\sf [$\cP$-affine singularities (First level: part II)]}

\vspace{1 mm}

\noindent {\sf [Step 12.]} Decompose $\Delta^*$ into irreducible components.

\noindent {\sf [Step 13.]} For each point $A$ at a zero-dimensional component of $\Delta^*$, if $A\not\in \Delta_4$ then apply Method 5 to compute $\alpha=\mult(A,\cSw)$, and include $\mathbf{j}(A)$ in
$\cSm_{\alpha}$.

\noindent {\sf [Step 14.]} For each 1-dimensional irreducible component $\cc$ of $\Delta^*$, compute its genus.

\noindent {\sf [Step 15.]} If $\cc$ is rational proceed as in Section \ref{sec-sing} (Level 1, Part II, rational case). This will generate an open subset $\cc^*$ of $\cc$ where the multiplicity is invariant and that would be included, via $\mathbf j$, in the corresponding $\cSm_k$. For the finitely many points in the $\cc \setminus \cc^*$, proceed as in Step 13.

\noindent {\sf [Step 16.]} If $\cc$ is not rational proceed as in Section \ref{sec-sing} (Level 1, Part II, positive genus case). This will generate an open subset $\cc^*$ of $\cc$ where the multiplicity is invariant and that would be included, via $\mathbf j$, in the corresponding $\cSm_k$. For the finitely many points in the $\cc \setminus \cc^*$, proceed as in Step 13.

\para

\hspace*{2cm} {\sf [$\cP$-affine singularities (Second, Third and Fourth Level)]}

\vspace{1 mm}

\noindent {\sf [Step 17.]}  If $\Delta_4 \setminus \cB_a =\emptyset$ (see Step 4)
 go to Step 20 else  proceed as follows

\noindent\hspace*{2 mm} \parbox{15 cm}{{\sf [Step 17.1.]} If  $\Delta_4\setminus \Delta_3=\emptyset$ go to Step 18.

\noindent {\sf [Step 17.2.]} Compute the irreducible decomposition of $\Delta_4\setminus \Delta_3$.

\noindent {\sf [Step 17.3.]} Proceed as in Steps 13, 14, 15, using $\cPz$ instead of $\cPw$. }

\noindent {\sf [Step 18.]}  If $(\Delta_4\cap \Delta_3) \setminus \cB_a= \emptyset$ go to Step 20 else  proceed as follows

\noindent\hspace*{2 mm} \parbox{15 cm}{{\sf [Step 18.1.]} If  $(\Delta_4\cap \Delta_3)\setminus \Delta_2=\emptyset$ go to Step 19.

\noindent {\sf [Step 18.2.]} Compute the irreducible decomposition of $(\Delta_4\cap \Delta_3)\setminus \Delta_2$.

\noindent {\sf [Step 18.3.]} Proceed as in Steps 13, 14, 15, using $\cPy$ instead of $\cPw$. }

\noindent {\sf [Step 19.]}  If $(\Delta_4\cap \Delta_3 \cap \Delta_2) \setminus \cB_a= \emptyset$ go to Step 20 else  proceed as follows

\noindent\hspace*{2 mm} \parbox{15 cm}{{\sf [Step 19.1.]} If  $(\Delta_4\cap \Delta_3\cap \Delta_2)\setminus \Delta_1=\emptyset$ go to Step 20.

\noindent {\sf [Step 19.2.]} Compute the irreducible decomposition of  $(\Delta_4\cap \Delta_3\cap \Delta_2)\setminus \Delta_1$.

\noindent {\sf [Step 19.3.]} Proceed as in Steps 13, 14, 15, using $\cPx$ instead of $\cPw$. }

\para

\hspace*{2cm} {\sf [$\cP$  singularities  at infinity]}

\vspace{1 mm}

\noindent {\sf [Step 20.]} If $(0:1:0)\not\in \cB$ (i.e., not all $\cpp_i(0,1,0)$ vanish) apply Method 5 to compute $\alpha:=\mult(\tilde{\cP}_{x_4}(0,0),\cSw)$  and include $(0:1:0)$ in
$\cSm_\alpha$; we are assuming that $\cpp_4(0,1,0)\neq 0$, otherwise take other component and proceed accordingly.

\noindent {\sf [Step 21.]} Check whether $\cpp_4(1,\lambda,0)$ does not vanish. If it does vanish, find  $\cpp_i$ not vanishing at $(1:\lambda:0)$ and proceed accordingly.

\noindent {\sf [Step 22.]} For each $(1:\lambda_0:0)$ such that $\cpp_4(1,\lambda_0,0)=0$ if $(1:\lambda_0:0)\not\in\cB$: find $\cpp_j$ such that $\cpp_j(1,\lambda_0,0)\neq 0$, compute  $\alpha:=\mult(\hat{\cP}_{x_j}(\lambda_0,0),\cS_{x_j})$, and include $(0:1:0)$ in
$\cSm_\alpha$.
\noindent {\sf [Step 23.]} Proceed as in Step 15 using $\hat{\cP}_{x_4}(\hat{s},\hat{v})$, instead of $\cPw$, and the curve $(\lambda, 0)$.

\para

\begin{example}\label{ex-1}
We consider the parametrization
\[ \cP(\ttt, \sss, \vvv)=\left({\sss}^{2}:{\sss}^{2}+{\ttt}^{2}+{\vvv}^{2}: \left( {
\ttt}+2\,\sss \right) \vvv: \left( \sss+\ttt
 \right) \vvv\right) \]
of the surface $\cS$.
One can easily check that the parametrization satisfies all hypotheses in Section \ref{sec-notation}. In addition
\begin{multicols}{2}
\begin{itemize}
\item[] $\cPx=\left({\dfrac {{s}^{2}+{t}^{2}+1}{{s}^{2}}},{\dfrac {t+2\,s}{{s}^{2}}},{
\dfrac{s+t}{{s}^{2}}} \right)$
\item[] $\cPy=\left( {\dfrac {{s}^{2}}{{s}^{2}+{t}^{2}+1}},{\dfrac {t+2\,s}{{s}^{2}+{t}^{2}+
1}},{\dfrac {s+t}{{s}^{2}+{t}^{2}+1}} \right)$
\item[] $\cPz=\left({\dfrac{{s}^{2}}{t+2\,s}},{\dfrac {{s}^{2}+{t}^{2}+1}{t+2\,s}},{\dfrac
{s+t}{t+2\,s}}\right)$
\item[] $\,\,\,\,\,\cPw=\left({\dfrac {{s}^{2}}{s+t}},{\dfrac {{s}^{2}+{t}^{2}+1}{s+t}},{\dfrac{t+2\,
s}{s+t}}  \right).$
\end{itemize}
\end{multicols}
\noindent Note that $\cPw$ satisfies the hypotheses in Step 0. In Step 1 one gets $n_2:=1$, and in Step 2 we get $A_0:=(1,1,1)$ and $n_1:=4$. In Step 3 we get that
\begin{itemize}
\item $\Delta_1$ is the line $s=0$,
\item $\Delta_2$ is the complex circle $s^2+t^2+1=0$,
\item $\Delta_3$ is the line $t+2s=0$, and
\item $\Delta_4$ is the line $s+t=0$.
\end{itemize}
Therefore, in Step 4 we get $\cB_a=\cB=\emptyset$. In Step 5 we get that $K=\tt+\ss$. We start the computation of $\Delta_{0}^{*}$. In Step 7.1. we get
$$ \Upsilon_1=\left( \ss+\tt \right)  \left( -{{ h_2}}^{2}\ss-{{h_2}}^{2}\tt+{\ss}^{2}{h_2}+{\ss}^{2}{h_1} \right), \Upsilon_2= \left( \ss+\tt \right) { h_2}\,\ss\,
 \left( -\ss+{ h_2} \right).
$$
In Step 7.2.

\noindent ${\mathfrak r}=-(\ss+\tt)^{3} (s-\ss)  (-s+h_{2})  ( -h_{2}^{2}\ss-h_{2}^{2}\tt+{\ss}^{2}h_{2}+{\ss}^{2
}h_{1})  ( -{h_{1}}^{2}{\ss}^{3}s+{h_{1}}
^{2}{\ss}^{2}{s}^{2}+{\ss}^{2}{h_{1}}^{2}sh_{2}-
2\,h_{2}\,\tt\,{s}^{2}\ss\,h_{1}+{\ss}^{
3}h_{2}-2\,h_{2}^{2}{\ss}^{2}+{\tt}^{2}sh_{2}^{2}\ss+h_{2}^{3}\ss+{s}^{2}{\tt}^{2}
h_{2}^{2}-s{\tt}^{2}h_{2}^{3}).
 $

\noindent In Step 7.3. $\cZ_K$ is the line $\ss+\tt=0$. Finally, in Step 7.4. we conclude that
$$\Delta_{0}^{*}=\{(\tt,\ss)\in \K^2\,|\, \tt+\ss=0.\}$$
In Step 8. we compute $\Delta_{1}^{*}$. In Step 8.1. we get that $\cZ_\infty$ is union of the lines $\tt+\ss=0$ and $\tt=0$. In Step 8.2. we get
$M_1=\ss+\tt-\cX_1(\ss+\tt),\, M_2=-\cX_2(\ss+\tt). $ So, in Step 8.3. we conclude that
\[ \Delta_{1}^{*}=\Delta_{0}^{*}\cup\{(\tt,\ss)\in \K^2\,|\, \tt=0\}.\]
For the computation of $\Delta_{2}^{*}$, in Step 9.1. we get $W=\cX_{2}^{2}(\ss+\tt)^4$. Therefore,
\[ \Delta_{2}^{*}=\Delta_{0}^{*}.\]
For computing $\Delta_{3}^{*}$ and $\Delta_{4}^{*}$, in Step 10.1., we get that $R$ has 6 non-zero coefficients w.r.t. $\{\cX_1,\cX_2\}$. Moreover,
$a=(\ss+\tt)^2(t-\tt) $ (see Step 10.2.) and $N=(\ss+\tt)^2$ (see Step 10.3.). Since $\card(J)=6$ we go through Step 10.5. Then (see Step 10.5.1.), $a_{i_1,j_1}=t(\ss^2+\tt^2)-\tt$, $L=\ss^2+\tt^2$, and

\noindent $\overline{S}= -\tt\,\ss^{6}-2\,\tt^{3}\ss^{4}-
\tt^{5}\ss^{2}+2\,W_{1}\,\ss^{3}\tt^{3}-W_{1}\,\tt^{4}\ss^{2}+2\,\ss^{2}W_{2}\,\tt^{4}+2\,\ss^{4}W_{2}\,{
\tt}^{2}-2\,\ss^{3}W_{2}\,\tt^{3}-2\,W_4\,\ss^{3}\tt^{2}+W_{4}\,\tt^{3
}\ss^{2}+W_{4}\,\ss^{4}\tt+2\,\tt^{3}W_{4}\,\ss^{4}+2\,\tt^{5}W_{4}\,
\ss^{2}+2\,\ss^{5}W_{1}\,\tt-3\,\ss^{6}W_{1}\,\tt^{2}-3\,\ss^{4}W_{1}\,{\tt}^{4}+5\,\ss^{4}W_{2}\,\tt^{4}+4\,\ss^{6}W_{2}\,\tt^{2}-2\,\ss^{5}W_2
\,\tt+\ss^{6}W_{3}\,\tt+2\,\ss^{4}W_{3}\,\tt^{3}-4\,\ss^{5}W_{4}\,\tt^{2}+3\,\ss^{6}\tt^{3}W_{4}+\ss^{8}W_{4}\,\tt+3\,\ss^{4}\tt^{5}W_{4}-\tt^{6}W_{1}
\,\ss^{2}+2\,\tt^{6}\ss^{2}W_{2}+\tt^{5}\ss^{2}W_{3}-2
\,\tt^{4}W_{4}\,\ss^{3}+\tt^{7}W_{4}\,\ss^{2}+\ss^{6}W_{1}-\ss^{8}W_{1}+\ss^{8}W_{2}-2\,\ss^{7}W_{4}
$

\noindent In Step 10.5.2. and Step 10.5.3. we get
\[ \Delta_{3}^{*}=\{(\tt,\ss)\in \K^2\,|\, \ss=0\},\,\,\Delta_{4}^{*}=\{(\tt,\ss)\in \K^2\,|\, \ss^2+\tt^2=0\}\cup \Delta_{0}^{*}.\]
Therefore in Step 11. we get
\[ \Delta^*=\{(\tt,\ss)\in \K^2\,|\, \ss=0\}\cup \{(\tt,\ss)\in \K^2\,|\, \tt=0\}\]\[\cup\{(\tt,\ss)\in \K^2\,|\, \ss+\tt=0\}\cup \{(\tt,\ss)\in \K^2\,|\, \ss\pm \ii \tt=0\} \]
Now Step 12. is already  executed, Step 13. is not needed, and in Step 14. we get that all components are rational; indeed lines. Then, for each of the lines we execute Step 15.
\begin{itemize}
\item Let $\cc$ be the line $\ss=0$. We consider the normal proper parametrization $${\cal Q}(\lambda)=(\lambda,0),\,\, \mbox{and} \,\,
{\cal H}(\lambda)=\left(0, \frac{1+\lambda^2}{\lambda}, 1\right). $$
$\cc\cap \Delta_{0}^{*}=\{(0,0)\}\subset \Delta_4$. So, we deal generically with $\cH(\lambda)$, knowing that $\ggg^{\cPw,\cH(\lambda)}=1$. That is, we go back to Step 5 taking ${\cal A}^*$ as $\cH(\lambda)$. We get that $K=1$. We know that $\Delta_{0}^{*}=\emptyset$. In Step 8. we get that the new $\cZ_{\infty}=\{0\}\subset \K$, that $M_1=\lambda(1-\cX_1), M_2=-\cX_2$. Therefore, the new $\Delta_{1}^{*}=\{0\}\subset \K.$ In Step 9.
$R=\cX_{2}^{2}\lambda t^2+\cX_{2}^{2}\lambda-\cX_{2}^{2}t-\cX_{2}^{2}\lambda^2 t-\cX_2-\cX_2\lambda^2+\lambda-\cX_1$ and
$W = \cX_{2}^{2}\lambda$. So $\Delta_{2}^{*}=\Delta_{1}^{*}$. In Step 10.1., we get that $R$ has 4 non-zero coefficients w.r.t. $\{\cX_1,\cX_2\}$. Moreover,
$a=1$ (see Step 10.2.) and $N=1$ (see Step 10.3.). Since $\card(J)=4$ we go through Step 10.5. Then (see Step 10.5.1.), $a_{i_1,j_1}=\lambda$, $L=\lambda$, and $\overline{S}=\lambda^2$. Thus, $\Delta_{3}^{*}=\Delta_{4}^{*}=\Delta_{2}^{*}=\Delta_{1}^{*}$. Summarizing, for all $\lambda\neq 0$ we
$\deg(\Phi_{2,3}(\cH(\lambda))=\deg_{t}(\pp_{\{\cX_1,\cX_2\}}(\res_s(H_1,H_2)))=2$. Therefore, $m_1=2$ and then
\[ \mult((\lambda:0:1))=\mult(\cH(\lambda),\cSw)=\frac{n_1-m_1}{n_2}=2,\,\,\mbox{with $\lambda\neq 0$}. \]
\item Let $\cc$ be the line $\tt=0$. We consider the normal proper parametrization $${\cal Q}(\lambda)=(0,\lambda),\,\, \mbox{and} \,\,
{\cal H}(\lambda)=\left(\lambda, \frac{1+\lambda^2}{\lambda}, 2\right). $$
$\cc\cap \Delta_{0}^{*}=\{(0,0)\}\subset \Delta_4$. So, we deal generically with $\cH(\lambda)$, knowing that $\ggg^{\cPw,\cH(\lambda)}=1$. That is, we go back to Step 5 taking ${\cal A}^*$ as $\cH(\lambda)$. We get that $K=1$. We know that $\Delta_{0}^{*}=\emptyset$. In Step 8. we get that $\cZ_{\infty}= \K$. So, we perform a suitable linear change of parameters in $\cPw$ to avoid that, namely we replace (during the analysis of this curve) $\cPw$ by ${\cal P}_{x_4}(s-t,t+s)$. Then, we get that $\cZ_\infty=\{0\}$. Then, repeating the computation we get that  $\Delta_{3}^{*}=\Delta_{4}^{*}=\Delta_{2}^{*}=\Delta_{1}^{*}=\{0\}$. Summarizing, for all $\lambda\neq 0$ we
$\deg(\Phi_{2,3}(\cH(\lambda))=\deg_{t}(\pp_{\{\cX_1,\cX_2\}}(\res_s(H_1,H_2)))=3$. Therefore, $m_1=3$ and then
\[ \mult((0:\lambda:1))=\mult(\cH(\lambda),\cSw)=\frac{n_1-m_1}{n_2}=1,\,\,\mbox{with $\lambda\neq 0$}. \]
\item The next curve is precisely $\Delta_4$. So, we  postpone its analysis to further levels.
\item  Let $\cc$ be the lines $\tt\pm\ii \ss=0$; we treat both curves simultaneously. We consider the normal proper parametrization $${\cal Q}(\lambda)=(\pm\ii \lambda,\lambda),\,\, \mbox{and} \,\,
{\cal H}(\lambda)=\left( \frac{\lambda^2}{\lambda\pm\ii\lambda}, \frac{1}{\lambda\pm\ii\lambda}, \frac{\pm\ii\lambda+2\lambda}{\lambda\pm\ii\lambda}    \right). $$
$\cc\cap \Delta_{0}^{*}=\{(0,0)\}\subset \Delta_4$. Repeating the computation we get that  $\Delta_{3}^{*}=\Delta_{4}^{*}=\Delta_{2}^{*}=\Delta_{1}^{*}=\{0\}$. Summarizing, for all $\lambda\neq 0$ we
$\deg(\Phi_{2,3}(\cH(\lambda))=\deg_{t}(\pp_{\{\cX_1,\cX_2\}}(\res_s(H_1,H_2)))=3$. Therefore, $m_1=3$ and then
\[ \mult((\pm \ii \lambda:\lambda:1))=\mult(\cH(\lambda),\cSw)=\frac{n_1-m_1}{n_2}=1,\,\,\mbox{with $\lambda\neq 0$}. \]
\end{itemize}
 We go to Step 17. $\Delta_4\cap \Delta_3=\{ (0,0)\}$. So, we work generically with $\Delta_4$ and $\cPz$. So we consider $\cQ(\lambda)=(-\lambda,\lambda)$ and $\cH(\lambda)=\cPz(\cQ(\lambda))$. Proceeding as above, we get $\Delta_{3}^{*}=\Delta_{4}^{*}=\Delta_{2}^{*}=\Delta_{1}^{*}=\{0\}$. Summarizing, for all $\lambda\neq 0$ we
$\deg(\Phi_{2,3}(\cH(\lambda))=\deg_{t}(\pp_{\{\cX_1,\cX_2\}}(\res_s(H_1,H_2)))=3$. Therefore, $m_1=3$ and then
\[ \mult((-\lambda:\lambda:1))=\mult(\cH(\lambda),\cSw)=\frac{n_1-m_1}{n_2}=1,\,\,\mbox{with $\lambda\neq 0$}. \]
In Step 18. since $(\Delta_4\cap \Delta_3)\setminus \Delta_2=\{(0,0)\}$, we compute the multiplicity of ${\cal P}_{x_2}(0,0)=(0,0,0)$ using $\cPy$.
 We get
 \[ \mult((0:0:1))=\mult((0,0,0),\cSy)=3.\]
Since $\Delta_4\cap \Delta_3\cap \Delta_2=\emptyset$ we skip Step 19. and we pass to Step 20.

\noindent In Step 20 we first observe that $\cB=\emptyset$. Moreover, since $\cpp_4(0,1,0)=0$ but $\cpp_1(0,1,0)\neq 0$ we compute $\mult(\tilde{\cP}_{x_1}(0,0),\cSx)$ by applying Method 5. One gets that $m_1:=\deg(\Phi_{2,3}(\tilde{\cP}_{x_1}(0,0))=2$. So,
\[ \mult((0:1:0))=\mult(\tilde{\cP}_{x_1}(0,0),\cSx)=\frac{n_1-m_1}{n_2}=2. \]
In Step 21. we observe that $\cpp_4(1,\lambda,0)=0$ but $\cpp_1(1,\lambda,0)=\lambda^2$. In Step 22. we need to analyze $(1:0:0)$. We do it using
$\hat{\cP}_{x_2}(\hat{s},\hat{v})$ to get
 \[ \mult((1:0:0))=\mult((0,0,0),\cSy)=3.\]
In Step 23. working with $\hat{\cP}_{x_1}(\hat{s},\hat{v})$ we conclude that
\[ \mult((1:\lambda:0))=\mult\left(\left(\frac{1+\lambda^2}{\lambda^2}, 0, 0),\cSx\right)\right)=2\,\,\mbox{for $\lambda\not\in \{0,1,-1\}$} \]
So it only remains to analyze $(1:1:0),(1:-1:0)$. We apply Method 5 with $\cPx$ to get
\[  \mult((1:1:0))=1, \mult((1:-1:0))=1.  \]
In Fig. 1, we summarize the conclusion.

\begin{figure}[h]
\begin{center}
\includegraphics[angle=270,width=9cm]{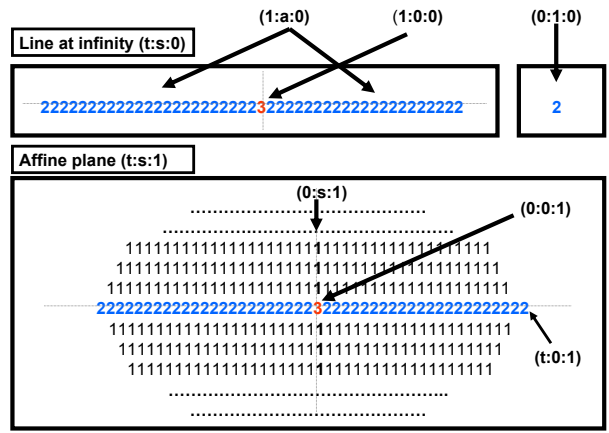}
\caption{Decomposition of the parameter space $\projdos$, where the multiplicities $\mult((t:s:v))$ are represented instead of $(t:s:v)$}\label{dib-fig-1}
\end{center}
\end{figure}


\end{example}

\end{document}